\newtheorem{Theorem}{Theorem}[section]
\newtheorem{lemma}[Theorem]{Lemma}
\newtheorem{proposition}[Theorem]{Proposition}
\newtheorem{corollary}[Theorem]{Corollary}
\theoremstyle{definition}
\newtheorem{definition}[Theorem]{Definition}
\newtheorem{example}[Theorem]{Example}
\newtheorem{remark}[Theorem]{Remark}
\numberwithin{equation}{section}
\begin{document}

\title[Uniform and couniform dimensions of inverse polynomial modules]{Uniform and couniform dimensions of inverse polynomial modules over skew Ore polynomials}


\author{Sebasti\'an Higuera}
\address{Universidad Nacional de Colombia - Sede Bogot\'a}
\curraddr{Campus Universitario}
\email{sdhiguerar@unal.edu.co}
\thanks{}

\author{Armando Reyes}
\address{Universidad Nacional de Colombia - Sede Bogot\'a}
\curraddr{Campus Universitario}
\email{mareyesv@unal.edu.co}
\thanks{}

\thanks{The authors were supported by the research fund of Department of Mathematics, Faculty of Science, Universidad Nacional de Colombia - Sede Bogot\'a, Colombia, HERMES CODE 53880.}

\subjclass[2020]{16D10, 16D60, 16D80, 16E45, 16S36, 16S85, 16W50.}

\keywords{Uniform dimension, couniform dimension, inverse polynomial module, skew polynomial ring, skew Ore polynomial, perfect ring, Bass module.}

\date{}

\dedicatory{Dedicated to Adonail Higuera}

\begin{abstract}

In this paper, we study the uniform and couniform dimensions of inverse polynomial modules over skew Ore polynomials. 

\end{abstract}

\maketitle


\section{Introduction}\label{ch0}

Throughout the paper, every ring $R$ is associative (not necessarily commutative) with identity. If $R$ is commutative, then it is denoted by $K$. An important tool to investigate theoretical properties of rings and modules is the {\em uniform dimension}. If $M_R$ is a right module (resp., $_RM$ is a left module), its {\em right uniform dimension} (resp., {\em left uniform dimension}) is denoted by ${\rm rudim}(M_R)$ (resp., ${\rm ludim}(_RM)$).


Shock \cite{Shock1972} proved that if $R$ has finite left uniform dimension, then the left uniform dimensions of the commutative polynomial ring $R[x]$ and $R$ are equal \cite[Theorem 2.6]{Shock1972}. In the setting of the {skew polynomial rings} defined by Ore \cite{Ore1931, Ore1933} as the ring $R[x;\sigma, \delta]$ where $\sigma$ is an endomorphism of $R$ and $\delta$ is a $\sigma$-derivation of $R$, Grzeszczuk \cite{Grzeszczuk1988} showed that $R[x; \delta]_{R[x;\delta]}$ and $R_R$ have the same uniform dimension if $R$ is right nonsingular or if $R$ is a $\mathbb{Q}$-algebra satisfying the descending chain condition on right annihilators \cite[Corollary 4]{Grzeszczuk1988}. Quinn \cite{Quinn1988} proved that if $R$ is a $\mathbb{Q}$-algebra and $\delta$ is locally nilpotent (a derivation $\delta$ is called {\em locally nilpotent} if for all $r \in R$ there exists $n(r)\ge 1$ such that $\delta^{n(r)}(r) = 0$ \cite[p. 11]{Freudenburg2006}), then $R[x; \delta]_{R[x;\delta]}$ and $R_R$ have the same uniform dimension \cite[Theorem 15]{Quinn1988}. Following Matczuk \cite{Matczuk1995}, if $R$ is semiprime left Goldie and $\sigma$ is an automorphism of $R$, then the left uniform dimensions of $R[x;\sigma, \delta]$ and $R$ are equal. Leroy and Matczuk \cite{LeroyMatczuk2005} generalized this result to the case where $\sigma$ is an injective endomorphism of $R$. Later, they investigated the uniform dimension of induced modules over $R[x;\sigma,\delta]$, and proved that $M_R \otimes_R R[x;\sigma,\delta]$ and $M_R$ have the same uniform dimension \cite[Lemma 4.9]{LeroyMatczuk2004}. 

Annin \cite{AnninPhD2002} studied the left uniform dimension of the {\em polynomial module} $M[x]_S$ which consists of all polynomials of the form $m(x)=m_0 + \cdots + m_kx^{k}$ with $m_i \in M_R$ for all $0 \le i\le k$, and $S:=R[x;\sigma]$ where $\sigma$ is an automorphism of $R$. Under certain compatibility conditions, Annin proved that $M_R$ and $M[x]_{S}$ have the same right uniform dimension \cite[Theorem 4.15]{AnninPhD2002}. In addition, he also investigated the couniform dimension of the {\em inverse polynomial module} $M[x^{-1}]_S$ formed by the polynomials $m(x)=m_0 + m_1x^{-1}+ \cdots + m_kx^{-k}$ with $m_i \in M_R$ for all $0 \le i\le k$. Annin showed that ${\rm rudim}(M_R)= {\rm rudim}(M[x^{-1}]_S)$ \cite[Theorem 4.7]{AnninPhD2002}.


Varadarajan \cite{Varadarajan1979} introduced the concept of {\em couniform dimension} as a dual theory of uniform dimension and denoted it by ${\rm corank}(M_R)$. Sarath and Varadarajan \cite{Sarathetal1979} showed that if $M_R$ has finite couniform dimension (that is, ${\rm corank}(M_R)< \infty$), then $M_R/J(M_R)$ is semisimple Artinian, where $J(M_R)$ denotes the {\em Jacobson radical} of $M_R$ \cite[Corollary 1.11]{Sarathetal1979}. Under certain conditions, they proved that $M_R/J(M_R)$ being semisimple Artinian is sufficient for $M_R$ to have finite couniform dimension \cite[Theorem 1.13]{Sarathetal1979}, and showed that $R_R$ has finite couniform dimension when $R_R$ right finite couniform dimension \cite[Corollary 1.14]{Sarathetal1979}. We recall that $R$ is called {\em right perfect} if $R/J(R_R)$ is semisimple and for every sequence $\{a_n\ | \ n \in \mathbb{N}\} \subseteq J(R)$, there exists $k \in \mathbb{N}$ such that $a_ka_{k-1}\cdots a_1 = 0$. Annin \cite{Annin2005} studied the couniform dimension of $M[x^{-1}]_S$ and proved that $M_R$ and $M[x^{-1}]_S$ have the same couniform dimension when $R$ is right perfect \cite[Theorem 2.10]{Annin2005}. 

Cohn \cite{Cohn1961} introduced the {\em skew Ore polynomials of higher order} as a generalization of the skew polynomial rings considering the relation $xr := \Psi_1(r)x + \Psi_2(r)x^2 + \dotsb$ for all $r \in R$, where the $\Psi$'s are endomorphisms of $R$. Following Cohn's ideas, Smits \cite{Smits1968} introduced the ring of skew Ore polynomials of higher order over a division ring $D$ and commutation rule defined by
\begin{align}
    xr:= r_1x + \cdots + r_kx^k,\ \text{for all}\ r\in R\ \text{and}\ k \ge 1.\label{eq:smits}  
\end{align}
         
The relation (\ref{eq:smits}) induces a family of endomorphisms $\delta_1, \ldots, \delta_k$ of the group $(D,+)$ with $\delta_i(r):=r_i$ for every $1\le i\le k$ \cite[p. 211]{Smits1968}. Smits proved that if $\{\delta_2, \ldots, \delta_k \}$ is a set of left $D$-independient endomorphisms (i.e., if $c_2\delta_2(r)+ \cdots + c_k\delta_k(r)=0$ for all $r \in D$ then $c_i=0$ for all $2 \le i \le k$ \cite[p. 212]{Smits1968}), then $\delta_1$ is a injective endomorphism \cite[p. 213]{Smits1968}. There exist some algebras such as Clifford algebras, Weyl-Heisenberg algebras, and Sklyanin algebras, in which this commutation relation is not sufficient to define the noncommutative structure of the algebras since a free non-zero term $\Psi_0$ is required. Maksimov \cite{Maksimov2000} considered the skew Ore polynomials of higher order with free non-zero term $\Psi_0(r)$ where $\Psi_0$ satisfies the relation $\Psi_0(rs) = \Psi_0(r)s + \Psi_1(r)\Psi_0(s) + \Psi_2(r)\Psi_0^{2}(s) + \dotsb$, for every $r, s \in R$. Later, Golovashkin and Maksimov \cite{Golovashkinetal2005} introduced the algebras $Q(a,b,c)$ over a field $\Bbbk$ of characteristic zero with two generators $x$ and $y$, subject to the quadratic relations $yx = ax^2 + bxy + cy^2$, where $a,b,c \in \Bbbk$. If $\{x^my^n\}$ forms a basis of $Q(a,b,c)$ for all $m,n \ge 0$, then the ring generated by the quadratic relation is an algebra of skew Ore polynomials, and it can be defined by a system of linear mappings $\delta_0,\ldots,\delta_k$ of $\Bbbk[x]$ into itself such that for any $p(x) \in \Bbbk[x]$, $yp(x) = \delta_0(p(x)) + \delta_1(p(x))y + \cdots + \delta_k(p(x))y^k$, for some $k \in \mathbb{N}$. 

Motivated by Annin's research \cite{Annin2005} about the couniform dimension of $M[x^{-1}]_S$ and the importance of the algebras of skew Ore polynomials of higher order, in this paper we study this dimension for inverse polynomial modules over skew Ore polynomials considered by the authors in \cite{HigueraReyes2023a}. Since some of its ring-theoretical, homological and combinatorial properties have been investigated recently (e.g., \cite{ChaconReyes2023, Ninoetal2024, NinoReyes2023} and references therein), this article can be considered as a contribution to the research on these objects.

The paper is organized as follows. Section \ref{Definitions} recalls some definitions and key results about skew Ore polynomials and completely $(\sigma,\delta)$-compatible modules. In Section \ref{uniformdimension} we prove that if $N_R$ is an essential submodule of $M_R$ then $N[x^{-1}]_A$ is an essential submodule of $M[x^{-1}]_A$, where $A$ is a skew Ore polynomial ring and $M_R$ is completely $(\sigma,\delta)$-compatible (Lemma \ref{essentialmodule}), and if $N_R$ is a uniforme submodule of $M_R$, then $N[x^{-1}]_A$ is an uniform submodule of $M[x^{-1}]_A$ (Lemma \ref{uniformmodule}). We also show that if $M_R$ is completely $(\sigma,\delta)$-compatible, then $M_R$ and $M[x^{-1}]_A$ have the same right uniform dimension (Theorem \ref{theoremuniform}). In Section \ref{couniformdimension} we investigate the hollow modules of $M[x^{-1}]_A$ (Lemma \ref{AnninLemma2.9}) and show that the couniform dimensions of $M_R$ and $M[x^{-1}]_A$ are equal when $R$ is right perfect (Theorem \ref{Annintheorem2.10}). As expected, our results extend those above corresponding to skew
polynomial rings of automorphism type presented by Annin \cite{Annin2005}. Finally, we say some words about a future research. 

The symbols $\mathbb{N}$, $\mathbb{Z}$, $\mathbb{R}$, and $\mathbb{C}$ denote the set of natural numbers including zero, the ring of integer numbers, and the fields of real and complex numbers, respectively. The term module will always mean right module unless stated otherwise. The symbol $\Bbbk$ denotes a field and $\Bbbk^{*} := \Bbbk\ \backslash\ \{0\}$.

\section{Preliminaries}\label{Definitions}

If $\sigma$ is an endomorphism of $R$, then a map $\delta : R \rightarrow R$ is called a {\em $\sigma$-derivation} of $R$ if it is additive and satisfies that $\displaystyle \delta(r s) = \sigma(r )\delta(s)+\delta(r )s$ for every  $r,s \in R$ \cite[p. 26]{GoodearlWarfield2004}. According to Ore \cite{Ore1931, Ore1933}, the {\em skew polynomial ring} (or {\em Ore extension}) is defined as the ring $R[x;\sigma,\delta]$ generated by $R$ and the indeterminate $x$ such that it is a free left $R$-module with basis $\left \{x^k\ | \ k \in \mathbb{N} \right \}$ subject to the relation $xr := \sigma(r)x + \delta(r)$ for all $r \in R$ \cite[p. 34]{GoodearlWarfield2004}. We present the skew Ore polynomial rings introduced by the authors \cite{HigueraReyes2023a}.

\begin{definition}[{\cite[Definition 2.2]{HigueraReyes2023a}}]\label{importantdefinition}
    If $\sigma$ is an automorphism of $R$ and $\delta$ is a locally nilpotent $\sigma$-derivation of $R$, we define the {\em skew Ore polynomial ring} $A:=R(x; \sigma,\delta)$ which consists of the uniquely representable elements $r_0 + r_1x + \cdots + r_kx^k$ where $r_i \in R$ and $k \in \mathbb{N}$, with the commutation rule $xr:= \sigma(r)x + x\delta(r)x$, for all $r \in R$.
\end{definition}

According to Definition \ref{importantdefinition}, if $r\in R$ and $\delta^{n(r)}(r) = 0$ for some $n(r)\ge 1$, then
\begin{align}
    xr= \sigma(r)x + \sigma\delta(r)x^2 + \cdots + \sigma\delta^{n(r)-1}(r)x^{n(r)}.\label{eq:our}
\end{align}

If we define the endomorphisms $\Psi_i:=\sigma\delta^{i-1}$ for all $i\ge 1$ and $\Psi_0:=0$, then $A$ is a skew Ore polynomial of higher order in the sense of Cohn \cite{Cohn1961}.

\begin{example}[{\cite[Example 2.3]{HigueraReyes2023a}}]\label{ExampleskewOre} We present some examples of skew Ore polynomial rings.
    \begin{enumerate} 
        \item If $\delta=0$ then $xr=\sigma(r)x$, and thus $R(x;\sigma)=R[x;\sigma]$ is the skew polynomial ring where $\sigma$ is an automorphism of $R$.
        \item The {\em quantum plane} $\Bbbk_q[x,y]$ is the algebra generated by $x, y$ over $\Bbbk$ subject to the commutation rule $xy=qyx$ with $q \in \Bbbk^*$ and $q\neq 1$. We note that $\Bbbk_q[x,y] \cong \Bbbk[y](x;\sigma)$, where $\sigma(y):=qy$ is an automorphism of $\Bbbk[y]$.
        \item The {\em Jordan plane} $\mathcal{J}(\Bbbk)$ defined by Jordan \cite{Jordan2001} is the free algebra generated by the indeterminates $x, y$ over $\Bbbk$ and the relation $yx = xy + y^2$. This algebra can be written as the skew polynomial ring $\Bbbk[y][x;\delta]$ with $\delta(y):=-y^2$. On the other hand, notice that $\delta(x)= 1$ is a locally nilpotent derivation of $\Bbbk[x]$, and thus the Jordan plane also can be seen as $\Bbbk[x](y;\delta)$.
        \item D\'iaz and Pariguan \cite{DiazPariguan2009} introduced the {\em $q$-meromorphic Weyl algebra} $MW_q$ as the algebra generated by $x,y$ over $\mathbb{C}$, and defining relation $yx=qxy + x^2$, for $0 < q < 1$. Lopes \cite{Lopes2023} showed that using the generator $Y =y+(q-1)^{-1}x$ instead of $y$, it follows that $Yx = qxY$, and thus the $q$-meromorphic Weyl algebra is the quantum plane $\mathbb{C}_q[x,y]$ \cite[Example 3.1]{Lopes2023}. 
        \item Consider the algebra $Q(0,b,c)$ defined by Golovashkin and Maksimov \cite{Golovashkinetal2005} with $a=0$. It is straightforward to see that $\sigma(x)=bx$ is an automorphism of $\Bbbk[x]$ with $b \neq 0$, $\delta(x)=c$ is a locally nilpotent $\sigma$-derivation of $\Bbbk[x]$ and so $Q(0,b,c)$ can be interpreted as $A=\Bbbk[x](y;\sigma,\delta)$.
        \item If $\delta_1$ is an automorphism of $D$ and $\{\delta_2, \ldots, \delta_k \}$ is a set of left $D$-independient endomorphism, then $\delta:=\delta_1^{-1} \delta_2$ is a $\delta_1$-derivation of $D$, $\delta_{i+1}(r)= \delta_1\delta^{i}(r)$, and $\delta^{k}(r)=0$ for all $r \in D$ \cite[p. 214]{Smits1968}, and thus (\ref{eq:smits}) coincides with (\ref{eq:our}). In this way, the skew Ore polynomial rings of higher order defined by Smits can be seen as $D(x;\delta_1,\delta)$.
    \end{enumerate}
\end{example}

We recall that a multiplicative subset $X$ of $R$ satisfies {\em left Ore condition} if $Xr \cap Rx \neq \emptyset$, for all $r \in R$ and $x \in X$. If this is the case, then $X$ is called a {\em left Ore set}. The authors \cite{HigueraReyes2023a} proved that $X=\{x^k\ | \ k \ge 0 \}$ is a left Ore set of the algebra $A$ \cite[Proposition 2.3]{HigueraReyes2023a}. In this way, we can localize $A$ by $X$ and denote it by $X^{-1}A$. It is not hard to see that $x^{-1}$ satisfies the relation $x^{-1}r:= \sigma'(r)x^{-1} + \delta'(r)$, for all $r \in R$ with $\sigma'(r):=\sigma^{-1}(r)$ and $\delta'(r):=-\delta\sigma^{-1}(r)$. 

Dumas \cite{Dumas1991} studied the field of fractions of $D[x;\sigma,\delta]$ where $\sigma$ is an automorphism of $D$ and stated that one technique for this purpose is to consider it as a subfield of a certain field of series \cite[p. 193]{Dumas1991}: if $Q$ is the field of fractions of $D[x;\sigma,\delta]$, then $Q$ is a subfield of the {\em field of series of Laurent} $D((x^{-1};\sigma^{-1}, -\delta\sigma^{-1}))$ whose elements are of the form $r_{-k}x^{-k}+\cdots + r_{-1}x^{-1} + r_0 + r_1x+ \cdots$ for some $k \in \mathbb{N}$, and satisfy the commutation rules given by
\begin{align*}
    xr&:= \sigma(r)x + \sigma\delta(r)x^2 + \cdots = \sigma(r)x +x\delta(r)x,\ \text{and}\\
     x^{-1}r&:= \sigma'(r)x^{-1} + \delta'(r),\ \text{for all}\ r\in R.
\end{align*}

By \cite[Proposition 2.3]{HigueraReyes2023a}, if $\sigma$ is an automorphism of $D$ and $\delta$ is a locally nilpotent $\sigma$-derivation of $D$ then $X^{-1}A \subseteq D((x^{-1};\sigma^{-1}, -\delta\sigma^{-1}))$.


Following \cite[Remark 2.4]{HigueraReyes2023a}, if $\sigma$ is an automorphism of $R$ and $\delta$ is a $\sigma$-derivation of $R$, we denote by $f_j^i$ the endomorphism of $R$ which is the sum of all possible words in $\sigma',\delta'$ built with $i$ letters $\sigma'$ and $j-i$ letters $\delta'$, for $i \leq j$. In particular, $f_0^0 = 1$, $f_j^j = \sigma'^{j}$, $f_j^0 = \delta'^j$ and $f_j^{j-1} = \sigma'^{j-1}\delta' +\sigma'^{j-2}\delta'\sigma' +\cdots+\delta'\sigma'^{j-1}$; if $\delta\sigma=\sigma\delta$ then $f_j^{i}= \binom{j}{i} \sigma'^i\delta'^{j-i}$. If $r \in R$ and $k \in\mathbb{N}$, then the following formula holds:
\begin{align}
    \displaystyle x^{-k}r =\sum_{i=0}^kf_{k}^{i}(r)x^{-i}.\label{relacion2}
\end{align}

In addition, if $r,s \in R$ and $k, k' \in \mathbb{N}$ then
\begin{align}
    \displaystyle (rx^{-k})(sx^{-k'}) = \sum_{i=0}^{k}rf_k^i(s)x^{-(k+k')}. \label{relacion3}
\end{align}

Considering the usual addition of polynomials and the product induced by (\ref{relacion2}) and (\ref{relacion3}), the authors defined the ring of polynomials in the indeterminate $x^{-1}$ with coefficients in $R$ and denote it by $R[x^{-1}]$. The {\em inverse polynomial module} $M[x^{-1}]_R$ is defined as the set of all the polynomials of the form $f(x)=m_0 + \cdots + m_kx^{-k}$ with $m_i \in M_R$ for all $1\le i \le n$, together with the usual addition of polynomials and the product given by (\ref{relacion2}) as follows:
\begin{align}
    \displaystyle mx^{-k}r :=\sum_{i=0}^kmf_{k}^{i}(r)x^{-i},\ \text{for all}\ m\in M_R\ \text{and}\ r\in R. 
\end{align}

\begin{remark}[{\cite[Remark 2.5]{HigueraReyes2023a}}]
If $m(x)=m_0 + m_1x^{-1} + \cdots + m_kx^{-k} \in M[x^{-1}]_R$, the \textit{leading monomial} of $m(x)$ is denoted as ${\rm lm}(m(x))=x^{-k}$, the \textit{leading coefficient} of $m(x)$ by ${\rm lc}(m(x))=m_k$, and the \textit{leading term} of $m(x)$ as ${\rm lt}(m(x))=m_kx^{-k}$. We define the {\em negative degree} of $x^{-k}$ by $\deg(x^{-k}):= -k$ for any $k \in \mathbb{N}$, and the {\em negative degree} of $m(x)$ is given by $\deg(m(x))={\rm max}\{\deg(x^{-i})\}_{i=0}^k$ for all $m(x)\in M[x^{-1}]_R$. For any $m(x) \in M[x^{-1}]_R$, we denote by $C_{m}$ the set of the coefficients of $m(x)$.
\end{remark}

According to Annin \cite{Annin2004} (c.f. Hashemi and Moussavi \cite{HashemiMoussavi2005}), if $\sigma$ is an endomorphism of $R$ and $\delta$ is a $\sigma$-derivation of $R$, then $M_R$ is called $\sigma$-{\em compatible} if for each $m\in M_R$ and $r\in R$, we have that $mr = 0$ if and only if $m\sigma(r)=0$; $M_R$ is $\delta$-{\em compatible} if for $m \in M_R$ and $r\in R$, $mr = 0$ implies that $m\delta(r)=0$; if $M_R$ is both $\sigma$-compatible and $\delta$-compatible, then $M_R$ is called a ($\sigma,\delta$)-{\em compatible module} \cite[Definition 2.1]{Annin2004}. Annin introduced a stronger notion of compatibility to study the attached prime ideals of the inverse polynomial module $M[x^{-1}]_S$. Following \cite[Definition 1.4]{Annin2011}, $M_R$ is called {\it completely $\sigma$-compatible} if $(M/N)_R$ is $\sigma$-compatible, for every submodule $N_R$ of $M_R$. The authors \cite{HigueraReyes2023a} introduced the following definition with the aim of studying the attached prime ideals of $M[x^{-1}]_A$ (see Section \ref{uniformdimension}).

\begin{definition}[{\cite[Definition 3.1]{HigueraReyes2023a}}] If $\sigma$ is an endomorphism of $R$ and $\delta$ is a $\sigma$-derivation of $R$, then $M_R$ is called {\em completely $\sigma$-compatible} if for each submodule $N_R$ of $M_R$, we have that $(M/N)_R$ is $\sigma$-compatible; $M_R$ is {\it completely $\delta$-compatible} if for each submodule $N_R$ of $M_R$, we obtain that $(M/N)_R$ is $\delta$-compatible; $M_R$ is {\it completely $(\sigma,\delta)$-compatible} if it is both completely $\sigma$-compatible and $\delta$-compatible.
\end{definition}

\begin{example} [{\cite[Example 3.2]{HigueraReyes2023a}}] 
\begin{itemize}
   \item[{\rm (1)}] If $M_R$ is simple and $(\sigma,\delta)$-compatible, then $M_R$ is completely $(\sigma,\delta)$-compatible. 
    \item[{\rm (2)}] Let $K$ be a local ring with maximal ideal $\mathfrak{m}$ and $\sigma$ an automorphism of $K$. Annin \cite{AnninPhD2002} proved that $M_K := K/\mathfrak{m}$ is a $\sigma$-compatible module, and since $M_K$ is simple it follows that $M_K$ is completely $\sigma$-compatible \cite[Example 3.35]{AnninPhD2002}. If $\delta$ is a $\sigma$-derivation of $K$ such that $\delta(r) \in \mathfrak{m}$ for every $r \in \mathfrak{m}$, then $M_K$ is completely $\delta$-compatible. Indeed, if $\overline{0} \neq \overline{s} \in M_K$ and $r \in K$ satisfy that $\overline{s}r = 0$ then $sr \in \mathfrak{m}$, and since $s \notin \mathfrak{m}$ we obtain that $r \in \mathfrak{m}$. If $\delta(r) \in \mathfrak{m}$ for all $r \in \mathfrak{m}$, it follows that $s\delta(r) \in \mathfrak{m}$ and so $\overline{s}\delta(r) = 0$. Therefore, $M_K$ is $\delta$-compatible and thus $M_K$ is completely $\delta$-compatible. 
\end{itemize}
\end{example}



We recall some properties of completely $(\sigma,\delta)$-compatible modules. 

\begin{proposition} \label{Propertycompletely}
    \begin{itemize} 
        \item[{\rm (a)}] \cite[Proposition 3.3]{HigueraReyes2023a} If $M_R$ is completely $(\sigma, \delta)$-compatible and $N_R$ is a submodule of $M_R$, then the following assertions hold:
\begin{enumerate}
    \item[{\rm (1)}] If $mr \in N_R$ then $m\sigma^i(r), m\delta^{j}(r) \in N_R$ for each $i,j \in \mathbb{N}$.
    \item[{\rm (2)}] If $mrr' \in N_R$ then $m\sigma(\delta^{j}(r))\delta(r'), m\sigma^{i}(\delta(r))\delta^{j}(r') \in N_R$ for all $i,j \in \mathbb{N}$. In particular, $mr\delta^{j}(r'), m\delta^{j}(r)r' \in N_R$ for all $j \in \mathbb{N}$.
    \item[{\rm (3)}] If $mrr' \in N_R$ or $m\sigma(r)r' \in N_R$ then $m\delta(r)r' \in N_R$.
\end{enumerate}
        \item[{\rm (b)}] \cite[Proposition 3.4]{HigueraReyes2023a} If $\sigma$ is an endomorphism of $R$, $\delta$ is a $\sigma$-derivation of $R$ and $M_R$ is completely $(\sigma,\delta)$-compatible, then
  \begin{itemize}
    \item[\rm (1)] $M_R$ is a $(\sigma,\delta)$-compatible module. 
    \item[\rm (2)] $(M/N)_R$ is completely $(\sigma, \delta)$-compatible for all submodule $N_R$ of $M_R$.
\end{itemize}
     \item[{\rm (c)}] \cite[Proposition 3.5]{HigueraReyes2023a} If $\sigma$ is bijective and $M_R$ is completely $(\sigma,\delta)$-compatible, then $M_R$ is a completely $(\sigma',\delta')$-compatible module.
    \end{itemize}
\end{proposition}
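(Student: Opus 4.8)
The plan is to deduce all three parts from the single defining feature of complete $(\sigma,\delta)$-compatibility — that \emph{every} quotient $M/N$ is $(\sigma,\delta)$-compatible — together with one auxiliary ``transfer'' fact about products of two ring elements. I would first record this fact: if $P_R$ is $(\sigma,\delta)$-compatible, $p\in P$ and $a,b\in R$, then $pab=0$ if and only if $p\sigma(a)b=0$, and $pab=0$ implies $p\delta(a)b=0$. The equivalence is obtained by using $\sigma$-compatibility twice — applied to the element $p\sigma(a)$ with ring element $b$ it gives $p\sigma(a)b=0\iff p\sigma(a)\sigma(b)=0$, and applied to the element $p$ with ring element $ab$ it gives $p(ab)=0\iff p\sigma(ab)=0=p\sigma(a)\sigma(b)$; chaining the two yields $pab=0\iff p\sigma(a)b=0$. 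For the $\delta$ clause, $\delta$-compatibility applied to $p$ and $ab$ gives $p\bigl(\sigma(a)\delta(b)+\delta(a)b\bigr)=p\delta(ab)=0$; since $pab=0$ forces $p\sigma(a)b=0$, $\delta$-compatibility applied to $p\sigma(a)$ and $b$ gives $p\sigma(a)\delta(b)=0$, whence $p\delta(a)b=0$. Iterating, and combining with the trivial one-factor inductions $pc=0\iff p\sigma^{i}(c)=0$ and $pc=0\implies p\delta^{j}(c)=0$ (for all $i,j\ge 0$), yields the omnibus fact that $pab=0$ implies $p\,\sigma^{i}\delta^{j}(a)\,\delta^{k}(b)=0$ for all $i,j,k\ge 0$.

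\emph{Part (a).} Apply the transfer fact with $P=M/N$, which is $(\sigma,\delta)$-compatible precisely because $M_R$ is completely $(\sigma,\delta)$-compatible. Item (1) is the one-factor statement applied to $\overline m\,r=0$ in $M/N$. For item (2), from $\overline m\,r r'=0$ the omnibus fact gives $\overline m\,\sigma^{i}\delta^{j}(r)\,\delta^{k}(r')=0$ for all $i,j,k$; setting $(i,j,k)=(1,j,1)$, $(i,1,j)$, $(0,0,j)$, and $(0,j,0)$ yields, respectively, $m\sigma(\delta^{j}(r))\delta(r')\in N$, $m\sigma^{i}(\delta(r))\delta^{j}(r')\in N$, $m\,r\,\delta^{j}(r')\in N$, and $m\,\delta^{j}(r)\,r'\in N$. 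For item (3), the case $mrr'\in N$ is the $\delta$ clause of the transfer fact (with $a=r$, $b=r'$), and the case $m\sigma(r)r'\in N$ reduces to it because the $\sigma$ clause is an equivalence: $\overline m\,\sigma(r)\,r'=0\iff\overline m\,r\,r'=0$.

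\emph{Parts (b) and (c).} For (b)(1) take $N=0$, so $M\cong M/0$ is $(\sigma,\delta)$-compatible. For (b)(2), given $N\le L\le M$, the third isomorphism theorem furnishes an $R$-module isomorphism $(M/N)/(L/N)\cong M/L$, and $M/L$ is $(\sigma,\delta)$-compatible since $M_R$ is completely so; letting $L$ range over all submodules of $M$ containing $N$ shows every quotient of $M/N$ is $(\sigma,\delta)$-compatible, i.e.\ $M/N$ is completely $(\sigma,\delta)$-compatible. For (c), fix a submodule $N$, put $\overline m\in M/N$ and $r\in R$; since $\sigma$ is bijective, $\sigma$-compatibility of $M/N$ gives $\overline m\,\sigma^{-1}(r)=0\iff\overline m\,\sigma(\sigma^{-1}(r))=0\iff\overline m\,r=0$, which is $\sigma'$-compatibility, and if $\overline m\,r=0$ then $\overline m\,\sigma^{-1}(r)=0$, so $\delta$-compatibility gives $\overline m\,\delta(\sigma^{-1}(r))=0$, that is $\overline m\,\delta'(r)=0$ because $\delta'=-\delta\sigma^{-1}$; as this holds for every $N$, $M_R$ is completely $(\sigma',\delta')$-compatible.

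\emph{Main obstacle.} The only non-formal point is the transfer fact, specifically the step from $mrr'\in N$ to $m\delta(r)r'\in N$: $\delta$-compatibility alone only delivers the sum $m\bigl(\sigma(r)\delta(r')+\delta(r)r'\bigr)\in N$, so to isolate $m\delta(r)r'$ one must separately establish $m\sigma(r)\delta(r')\in N$, and this is exactly what the double application of $\sigma$-compatibility provides. Everything downstream is bookkeeping with the one-factor facts and the isomorphism theorems.
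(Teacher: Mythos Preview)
The paper does not supply its own proof of this proposition: each of the three parts is quoted, without argument, from Propositions~3.3--3.5 of the authors' companion paper \cite{HigueraReyes2023a}. There is therefore no in-paper proof to compare your attempt against.

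That said, your argument is correct. The device of first isolating the two-factor ``transfer fact'' in an arbitrary $(\sigma,\delta)$-compatible module $P$ --- namely $pab=0\iff p\sigma(a)b=0$ and $pab=0\Rightarrow p\delta(a)b=0$ --- and then specializing to $P=M/N$ is exactly the standard route; it is also the approach taken in the literature on (complete) compatibility (cf.\ Annin, Hashemi--Moussavi). Your derivation of that fact is clean: the key subtraction step, where the single application of $\delta$-compatibility only yields $p\bigl(\sigma(a)\delta(b)+\delta(a)b\bigr)=0$ and the cross term $p\sigma(a)\delta(b)$ must be killed separately via the $\sigma$-equivalence, is precisely the non-formal point, and you handle it correctly. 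The omnibus consequence $pab=0\Rightarrow p\,\sigma^{i}\delta^{j}(a)\,\delta^{k}(b)=0$ then dispatches all of~(a), and parts~(b) and~(c) are the expected short arguments with the third isomorphism theorem and the substitution $r\mapsto\sigma^{-1}(r)$.
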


\section{Uniform dimension of inverse polynomial modules}\label{uniformdimension}  
In this section, we study the uniform dimension of the inverse polynomial module $M[x^{-1}]_A$. We define a structure of $A$-module for $M[x^{-1}]$ as follows: 
    \begin{align}
        mx^{-1}r &:= m\sigma'(r)x^{-1} + m\delta'(r),\ \text{for all}\ r \in R\ \text{and}\ m \in M_R,\ \text{and} \label{eqn:(3.1)}\\
         x^{-i}x^{j}&:=x^{-i+j}\ \text{if}\ j \le i\ \text{and}\ 0\ \text{otherwise} \label{eqn:(3.2)}.
    \end{align}

It follows from (\ref{eqn:(3.1)}) and (\ref{eqn:(3.2)}) that if $\delta:= 0$ then $mx^{-i}rx^j:=m\sigma'^{i}(r)x^{-i+j}$ for all $r \in R$ and $i,j \in \mathbb{N}$ with $j \le i$. This coincides with $M[x^{-1}]_S$ \cite[Remark 3.5]{HigueraReyes2023a}. 

A submodule $N_R$ of $M_R$ is {\em essential} if $mR \cap N_R \neq 0$ for all non-zero element  $m \in M_R$, i.e. there exists $r\in R$ such that $mr \in N_R$ \cite[Definition 3.26]{Lam1998}. Next, we investigate the property of being essential and its passage from $M_R$ to $M[x^{-1}]_A$.

\begin{lemma}\label{essentialmodule}
If $M_R$ is a completely $(\sigma,\delta)$-compatible module and $N_R$ is an essential submodule of $M_R$, then $N[x^{-1}]_A$ is an essential submodule of $M[x^{-1}]_A$.
\end{lemma}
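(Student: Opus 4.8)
The plan is to verify the defining property of an essential submodule directly: for every nonzero $f(x)\in M[x^{-1}]_A$ I must exhibit $a\in A$ with $0\neq f(x)a\in N[x^{-1}]_A$. As a preliminary I would record the routine fact that $N[x^{-1}]_A$ is an $A$-submodule of $M[x^{-1}]_A$: by (\ref{eqn:(3.1)})--(\ref{eqn:(3.2)}), every coefficient of a product $n(x)a$ with $n(x)\in N[x^{-1}]_A$ and $a\in A$ is a finite sum of terms $n\,f_i^j(r)$ with $n\in N$ and $f_i^j(r)\in R$, hence lies in $N$; so intersecting a submodule of $M[x^{-1}]_A$ with $N[x^{-1}]_A$ is meaningful.

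The crucial observation is that right multiplication by a power of $x$ ``truncates'' an inverse polynomial to its leading coefficient. Writing ${\rm lm}(f(x))=x^{-k}$ and ${\rm lc}(f(x))=m_k\neq 0$, relation (\ref{eqn:(3.2)}) yields $x^{-i}x^{k}=0$ for $i<k$ and $x^{-k}x^{k}=x^{0}$, so that
\begin{align*}
 f(x)\,x^{k}=m_k .
\end{align*}
Since $m_k\neq 0$ and $N_R$ is essential in $M_R$, there is $r\in R$ with $0\neq m_k r\in N$; then $x^{k}r\in A$ and, by associativity of the $A$-action,
\begin{align*}
 f(x)\,(x^{k}r)=\bigl(f(x)\,x^{k}\bigr)r=m_k r\in N[x^{-1}]_A\setminus\{0\}.
\end{align*}
Thus $f(x)A\cap N[x^{-1}]_A\neq 0$ for every nonzero $f(x)$, which is exactly the assertion that $N[x^{-1}]_A$ is essential in $M[x^{-1}]_A$.

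The only point I would scrutinize is the identity $f(x)\,x^{k}={\rm lc}(f(x))$, which requires checking that combining (\ref{eqn:(3.1)}) with the truncation rule (\ref{eqn:(3.2)}) introduces no twisting, i.e.\ that $(m x^{-i})\cdot x^{k}=m\,(x^{-i}x^{k})$ with the coefficient $m$ untouched --- clear, since there is no ring element to commute past a factor $x^{k}$ appearing on the right. With this in place the argument needs nothing else; in particular I do not see where the complete $(\sigma,\delta)$-compatibility of $M_R$ enters here, although it is certainly needed for the uniform-dimension results that follow. If one wanted an argument not using that positive powers of $x$ act on $M[x^{-1}]$, the natural substitute would be an induction on the negative degree of $f(x)$ --- passing to a shift $f(x)x^{t}$ with nonzero constant term and then forcing the coefficients $\sum_{i\ge j}m_i f_i^{j}(r)$ of $f(x)x^{t}r$ into $N$ via Proposition \ref{Propertycompletely} (using part (c) to pass to $\sigma',\delta'$) --- and the propagation of membership across the maps $f_i^{j}$ would be the main obstacle there.
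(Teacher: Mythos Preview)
Your argument is correct and in fact cleaner than the paper's own proof. The paper multiplies $m(x)$ by $x^{k-1}\sigma(r)$, which leaves a length-two inverse polynomial whose $x^{-1}$-coefficient is $m_kr$ and whose constant term involves $m_k\delta(r)$; it then invokes complete $(\sigma,\delta)$-compatibility (Proposition~\ref{Propertycompletely}) to push $m_k\delta(r)$ into $N$ and conclude that the product lies in $N[x^{-1}]_A$. By going one power of $x$ further you truncate $m(x)$ all the way down to its leading coefficient $m_k\in M$, so that a subsequent multiplication by $r$ produces the single element $m_kr\in N\setminus\{0\}$ with no $\delta'$-terms to control. Your suspicion is therefore justified: the compatibility hypothesis is not used in your proof, and since the proof of Lemma~\ref{uniformmodule} does not use it either, Theorem~\ref{theoremuniform} in fact holds for an arbitrary $M_R$. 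The only point to make explicit is that the identity $f(x)\cdot(x^{k}r)=(f(x)\cdot x^{k})\cdot r$ is the associativity of the right $A$-action defined by (\ref{eqn:(3.1)})--(\ref{eqn:(3.2)}); once one grants that $M[x^{-1}]_A$ is an honest $A$-module, as the paper does, this step is automatic.
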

\begin{proof}
    Suppose that $N_R$ is an essential submodule of $M_R$. Let $0\neq m(x) \in M[x^{-1}]_A$ with degree $k$ and leading coefficient $m_{k}$. Since $N_R$ is essential, there exists $r\in R$ such that $0\neq m_kr \in N_R$. Notice that $m(x)(x^{k-1}\sigma(r))=m_krx^{-1} - m_k\delta(r)$. Thus, if $m_kr \in N_R$ and $M_R$ is completely $(\sigma,\delta)$-compatible, then $m_k\delta(r)\in N_R$ and hence $m(x)x^{k-1}\sigma(r) \in N[x^{-1}]_A$ proving that $N[x^{-1}]_A$ is essential in $M[x^{-1}]_A$.
\end{proof}

If every submodule $N_R$ of $M_R$ is essential, then $M_R$ is called {\em uniform}. Following \cite[p. 84]{Lam1998}, $M_R$ is {uniform} if the intersection of any two non-zero submodules of $M_R$ is non-zero. The following lemma studies the property uniform and its passage from $M_R$ to $M[x^{-1}]_A$.

\begin{lemma}\label{uniformmodule}
If $N_R$ is a uniform submodule of $M_R$ then $N[x^{-1}]_A$ is a uniform submodule of $M[x^{-1}]_A$.
\end{lemma}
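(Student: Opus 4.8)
The goal is to show that if $N_R$ is uniform then $N[x^{-1}]_A$ is uniform, i.e. any two nonzero submodules of $N[x^{-1}]_A$ intersect nontrivially. By the characterization recalled just before the statement, it suffices to show that every nonzero submodule of $N[x^{-1}]_A$ is essential in $N[x^{-1}]_A$; equivalently, I want to show that $N[x^{-1}]_A$ itself is uniform by exhibiting, for any two nonzero elements $p(x), q(x) \in N[x^{-1}]_A$, a common nonzero element in $p(x)A \cap q(x)A$. The first reduction is to pass from elements to their leading coefficients: since $N_R$ is uniform, the two nonzero leading coefficients ${\rm lc}(p(x))$ and ${\rm lc}(q(x))$ in $N_R$ have $0 \neq a \in {\rm lc}(p(x))R \cap {\rm lc}(q(x))R$, say $a = {\rm lc}(p(x))\, r = {\rm lc}(q(x))\, s$ for suitable $r, s \in R$.

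The key computational step is the one already used in the proof of Lemma \ref{essentialmodule}: if $p(x)$ has degree $-k$ with leading coefficient $m_k$, then multiplying on the right by the appropriate power of $x$ times $\sigma$ of a ring element lowers the degree and puts the desired coefficient in a controlled spot. Concretely, $p(x)\,(x^{k-1}\sigma(r))$ has leading term involving $m_k r \, x^{-1}$ (up to the lower-order $-m_k\delta(r)$ term, which by complete $(\sigma,\delta)$-compatibility — note $N[x^{-1}]_A$ sits inside $M[x^{-1}]_A$ and we may invoke compatibility relative to the submodule structure — again lies in $N$). Iterating, or choosing the exponent so that the whole product collapses to a single term, I can arrange $p(x)\cdot(\text{something in } A) = a = q(x)\cdot(\text{something in } A)$, both nonzero since $a \neq 0$. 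This produces the required nonzero element of the intersection $p(x)A \cap q(x)A$.

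The main obstacle, and the place requiring care, is controlling the \emph{lower-order terms} produced by the commutation rule (\ref{eqn:(3.1)})–(\ref{eqn:(3.2)}): when one multiplies $p(x)$ by $x^j r$, every coefficient of $p(x)$ interacts via the maps $f_k^i$ (equivalently via iterated $\sigma', \delta'$), so the product is not simply the leading term scaled. I need to verify that I can still force the \emph{leading} coefficient of the product to be exactly $m_k r$ (hence nonzero and equal to $a$), and that the choice of $r$ making ${\rm lc}(p(x))r = {\rm lc}(q(x))s \neq 0$ is compatible with both multiplications simultaneously. The clean way to handle this is: first multiply $p(x)$ by a power of $x$ alone to reduce its degree to $0$ — by (\ref{eqn:(3.2)}) this kills all terms of degree strictly below $-k+j$ and shifts the rest — reducing to the case where $p(x), q(x)$ are degree-zero elements, i.e. honest elements of $N_R \subseteq M_R$; then uniformity of $N_R$ directly gives the common nonzero element, and no further $\delta'$-bookkeeping is needed. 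I expect this degree-reduction-then-apply-uniformity-of-$N_R$ route to be the cleanest, with the only subtlety being to check the degree-zero reductions of $p(x)$ and $q(x)$ are still nonzero, which follows because the top coefficient survives the shift.
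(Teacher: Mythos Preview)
Your ``clean way'' at the end --- multiply $p(x)$ by $x^{k}$ (and $q(x)$ by $x^{l}$) so that only the leading coefficients $m_k,\,m'_l\in N_R$ survive, then use uniformity of $N_R$ to get $0\neq mR\cap m'R\subseteq p(x)A\cap q(x)A$ --- is exactly the paper's argument. The earlier detour via $x^{k-1}\sigma(r)$ and complete $(\sigma,\delta)$-compatibility is unnecessary and in fact unwarranted here, since this lemma (unlike Lemma~\ref{essentialmodule}) carries no compatibility hypothesis; just go straight to the degree-reduction step.
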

\begin{proof}
   Assume that $N_R$ is a uniform submodule of $M_R$, and let $n(x), n'(x)$ be two non-zero polynomials of $N[x^{-1}]_A$ with leading coefficients $m$ and $m'$, respectively. If $N_R$ is uniform then $0 \neq mR \cap m'R \subseteq n(x)A \cap n'(x)A$, and therefore $N[x^{-1}]_A$ is a uniform submodule of $M[x^{-1}]_A$.
\end{proof}

If there exist uniform submodules $U_1, \ldots , U_n$ of $M_R$ such that $U_1 \oplus \cdots \oplus U_n$ is an essential submodule of $M_R$, then $M_R$ has {\em finite uniform dimension} and it is denoted by ${\rm rudim}(M_R) = n$ \cite[Definition 6.2]{Lam1998}. According to \cite[Proposition 6.4]{Lam1998}, $M_R$ has infinite uniform dimension if and only if $M_R$ contains an infinite direct sum of non-zero submodules of $M_R$.

The following theorem shows that $M_R$ and $M[x^{-1}]_A$ have the same right uniform dimension and generalizes \cite[Theorem 4.7]{AnninPhD2002}.

\begin{Theorem}\label{theoremuniform}
If $M_R$ is completely $(\sigma,\delta)$-compatible, then
\[
{\rm rudim}(M[x^{-1}]_A)={\rm rudim}(M_R).
\]
\end{Theorem}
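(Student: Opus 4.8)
The plan is to establish the two inequalities ${\rm rudim}(M[x^{-1}]_A)\ge {\rm rudim}(M_R)$ and ${\rm rudim}(M[x^{-1}]_A)\le {\rm rudim}(M_R)$ separately, treating the finite and infinite cases together by exploiting the characterization in \cite[Proposition 6.4]{Lam1998}: a module has infinite uniform dimension precisely when it contains an infinite direct sum of non-zero submodules. The first inequality is the easy direction. Suppose $U_1\oplus\cdots\oplus U_n$ is an essential submodule of $M_R$ with each $U_i$ uniform. By Lemma \ref{uniformmodule} each $U_i[x^{-1}]_A$ is a uniform submodule of $M[x^{-1}]_A$, the sum $U_1[x^{-1}]_A + \cdots + U_n[x^{-1}]_A$ is direct (a polynomial identity forces each coefficient identity, so a direct sum of the coefficient modules lifts to a direct sum of the inverse polynomial modules), and it equals $(U_1\oplus\cdots\oplus U_n)[x^{-1}]_A$, which is essential in $M[x^{-1}]_A$ by Lemma \ref{essentialmodule}. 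Hence ${\rm rudim}(M[x^{-1}]_A)\ge n$; the same argument with an infinite family handles the infinite case. Here I would need $M_R$ completely $(\sigma,\delta)$-compatible in order to invoke Lemma \ref{essentialmodule}, and it is available by hypothesis.

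The reverse inequality is the substantive part. The natural strategy is a "leading coefficient" argument. Given a direct sum $V_1\oplus\cdots\oplus V_m$ of non-zero submodules of $M[x^{-1}]_A$, I want to produce a direct sum of $m$ non-zero submodules of $M_R$. The obvious candidate is to look at leading coefficients, but the leading coefficients of elements of $V_j$ need not form a submodule of $M_R$ (the $A$-action mixes degrees via \eqref{eqn:(3.1)}), so some care is needed. The cleaner route, mirroring Annin's proof of \cite[Theorem 4.7]{AnninPhD2002}, is: suppose for contradiction that ${\rm rudim}(M[x^{-1}]_A)>{\rm rudim}(M_R)=n$ (the infinite case is similar). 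Pick nonzero submodules $V_1,\dots,V_{n+1}$ of $M[x^{-1}]_A$ whose sum is direct. For each $j$ choose $0\neq v_j(x)\in V_j$ of minimal negative degree among the nonzero elements of $V_j$ (equivalently, minimal $k$ with ${\rm lm}=x^{-k}$), and consider the $R$-submodules of $M_R$ generated by their leading coefficients, or better, argue directly that $v_1(x)R+\cdots+v_{n+1}(x)R$ meets $M_R$ (the degree-zero part, i.e.\ $v_j(x)x^{k_j}$-type elements) in a direct sum of $n+1$ nonzero pieces. Concretely, multiplying $v_j(x)$ on the right by a suitable power $x^{k_j}$ via \eqref{eqn:(3.2)} collapses it to a nonzero element of $M_R$ (the leading coefficient survives because the minimal degree was chosen), and complete $(\sigma,\delta)$-compatibility guarantees that multiplication by $R$ afterward cannot annihilate what we want—this is exactly where Proposition \ref{Propertycompletely} and the relation $mx^{-1}r = m\sigma'(r)x^{-1}+m\delta'(r)$ enter, ensuring the "highest part" does not vanish. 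The directness in $M[x^{-1}]_A$ then forces directness of the images in $M_R$, contradicting ${\rm rudim}(M_R)=n$.

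The main obstacle I anticipate is making the collapsing map $v(x)\mapsto v(x)x^{k}$ genuinely control directness, because the $A$-action truncates ($x^{-i}x^j=0$ when $j>i$) and shifts coefficients simultaneously, so an element of $V_j$ of degree larger than $k_j$ will also be sent into $M_R$ but possibly with an unexpected coefficient, and I must check that a relation $\sum_j w_j = 0$ with $w_j\in V_j x^{k_j}\cap M_R$ can be pulled back to a relation among elements of the $V_j$'s inside $M[x^{-1}]_A$. The device to handle this, again following Annin, is to restrict attention to the leading coefficients: define $L_j = \{\,{\rm lc}(v(x)) : v(x)\in V_j,\ \deg v(x) = -k_j \,\}\cup\{0\}$, show each $L_j$ is a nonzero $R$-submodule of $M_R$ using \eqref{eqn:(3.1)} together with $\sigma$-compatibility (so that $m\sigma'(r)\ne 0$ whenever $m r'\ne 0$ for the relevant $r'$), and show $L_1+\cdots+L_{n+1}$ is direct by lifting any dependence to a dependence among polynomials of equal minimal degree in $\bigoplus V_j$ and reading off the top coefficient. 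I would organize the write-up as: (i) reduce to comparing supplies of independent submodules; (ii) the $\ge$ direction via Lemmas \ref{essentialmodule} and \ref{uniformmodule}; (iii) the $\le$ direction via the leading-coefficient submodules $L_j$, with complete $(\sigma,\delta)$-compatibility invoked exactly twice—once to know $L_j\neq 0$ and once to know the sum stays direct—and (iv) conclude, noting the infinite-dimensional case follows verbatim by using countable families and \cite[Proposition 6.4]{Lam1998}.
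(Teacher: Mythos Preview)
Your proposal is correct, and its first paragraph is exactly the paper's proof. What you do not exploit is that this first paragraph already gives equality, not merely $\ge$: exhibiting an essential direct sum of $n$ uniform submodules in $M[x^{-1}]_A$ yields ${\rm rudim}(M[x^{-1}]_A)=n$ outright, because that is the very \emph{definition} of uniform dimension $n$ adopted in the paper (quoted from \cite[Definition~6.2]{Lam1998} just before the theorem; the well-definedness of $n$ is Goldie's theorem, used implicitly). The paper's proof therefore ends where your first paragraph ends, together with the infinite case, which you also handle identically. Your leading-coefficient argument for the reverse inequality is consequently redundant; it is not wrong, and it supplies an independent proof of $\le$ that the paper does not pursue, but it is unnecessary once you recognize that the finite-dimensional characterization is an ``if and only if''. (Incidentally, the directness of $\sum L_j$ is even simpler than you indicate: since $v_j(x)\cdot x^{k_j}=m_j\in V_j\cap M$ by \eqref{eqn:(3.2)}, a relation $\sum m_j=0$ already lives inside $\bigoplus V_j$, so each $m_j=0$ without any degree alignment or compatibility hypothesis.)
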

\begin{proof}
If ${\rm rudim}(M_R) = n$, then there exist uniform submodules $N_1, \ldots, N_n$ of $M_R$ such that $N_1 \oplus \cdots \oplus N_n$ is an essential submodule of $M_R$. By Lemmas \ref{essentialmodule} and \ref{uniformmodule}, it follows that $N_1[x^{-1}],\ldots, N_n[x^{-1}]$ are uniform submodules of $M[x^{-1}]_A$, and $N_1[x^{-1}] \oplus \cdots \oplus N_n[x^{-1}]$ is essential in $M[x^{-1}]_A$, proving that ${\rm rudim}(M[x^{-1}]_A)=n$.

If ${\rm rudim}(M_R) = \infty$, there exist non-zero submodules $N_1, N_2, \ldots$ of $M_R$ such that $N_1 \oplus N_2 \oplus \cdots$ is a submodule of $M_R$. Thus $N_i[x^{-1}]_A$ is a non-zero submodule of $ M[x^{-1}]_A$ for all $i \ge 1$ and $N_1[x^{-1}]\oplus N_2[x^{-1}]\oplus \cdots$ is a submodule of $M[x^{-1}]_A$, which implies that ${\rm rudim}(M[x^{-1}]_A)= \infty$. Therefore ${\rm rudim}(M[x^{-1}]_A)={\rm rudim}(M_R)$.
\end{proof}

\section{Couniform dimension}\label{couniformdimension}
In this section, we investigate the couniform dimension of the inverse polynomial module $M[x^{-1}]_A$.

\begin{definition}[{\cite[Definition 1.8]{Varadarajan1979}}] \label{DefinitionCouniform} The {\em couniform dimension} of $M_R$ is given by
\[
{\rm corank}(M_R) = {\rm sup} \{k \ | \ M_R\ \text{surjects onto a direct sum of}\ k \ \text{non-zero modules} \}.
\] 
In particular, ${\rm corank}(\{0\}) = 0$.
\end{definition}

A submodule $N_R$ of $M_R$ is {\em small} if $N_R' + N_R = M_R$ implies $N_R' = M_R$ for every submodule $N_R'$ of $M_R$; if $N_R$ is a small submodule of $M_R$, we write $N_R \subseteq_s M_R$ \cite[p. 74]{Lam1998}. If every proper submodule $N_R$ of $M_R$ is small, then $M_R$ is called a {\em Hollow module} \cite[Definition 1.10]{Varadarajan1979}. $M_R$ is hollow if the sum of any two proper submodules remains proper \cite[Definition 1.4]{Annin2005}. Varadarajan proved that $M_R$ is hollow if and only if ${\rm corank}(M_R)=1$ \cite[Proposition 1.11]{Varadarajan1979}. 

\begin{proposition} 
    \begin{itemize}
        \item[{\rm (a)}] \cite[Proposition 1.3]{Annin2005}\label{proposition1.3couniform} The following assertions hold:
        \begin{itemize}
    \item[\rm (1)]  If $N_R$ is a submodule of $M_R$ then ${\rm corank}((M/N)_R) \le {\rm corank}(M_R)$.
    \item[\rm (2)] If $N_R$ is a small submodule of $M_R$ then ${\rm corank}(M_R) = {\rm corank}((M/N)_R)$. The converse holds if ${\rm corank}(M_R) < \infty$.
   \item[\rm (3)] If $M_1, M_2, \ldots, M_n$ are $R$-modules then 
   \[
   {\rm corank}\left ( \bigoplus\limits_{i=1}^n M_i \right) = \sum_{i=0}^n {\rm corank}(M_i).
   \]
\end{itemize}

\item[{\rm (b)}] \cite[Theorem 1.20]{Varadarajan1979}\label{Proposition1.5Couniform}  ${\rm corank}(M_R)<\infty$ if and only if there exist $H_1, \ldots, H_k$ hollow $R$-modules and a surjective homomorphism $\varphi : M \rightarrow  H_1 \oplus \cdots \oplus H_k$ such that ${\rm ker}\ \varphi \subseteq_s M$.
    \end{itemize}
\end{proposition}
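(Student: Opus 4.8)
The plan is to establish the four assertions (a)(1)--(3) and (b) in an order that minimises dependencies, obtaining (b) at the end from (a)(2), (a)(3) and the already quoted fact that a module has couniform dimension one exactly when it is hollow \cite[Proposition 1.11]{Varadarajan1979}. Assertion (a)(1) is immediate: composing any surjection $(M/N)_R\twoheadrightarrow L_1\oplus\cdots\oplus L_k$ onto nonzero modules with the canonical epimorphism $M_R\twoheadrightarrow (M/N)_R$ gives a surjection of $M_R$ onto the same direct sum, so every integer realised over $M/N$ is realised over $M$. For the forward part of (a)(2), if $N_R\subseteq_s M_R$ and $\varphi\colon M_R\twoheadrightarrow L:=\bigoplus_{i=1}^{k}L_i$ is a surjection onto nonzero modules with coordinate projections $p_i$, then each $p_i(\varphi(N_R))$ is the image of a small submodule under an epimorphism, hence small in $L_i$ and in particular proper since $L_i\neq 0$; the induced surjection $M_R/N_R\twoheadrightarrow\bigoplus_{i=1}^{k}L_i/p_i(\varphi(N_R))$ exhibits $k$ nonzero quotients, so ${\rm corank}((M/N)_R)\geq k$, and with (1) we get equality.

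The device for the remaining items is the remark that if $N_R+N_R'=M_R$ then the diagonal map $M_R\to M_R/N_R\oplus M_R/N_R'$ is onto. Using it, the inequality ${\rm corank}\big(\bigoplus_{i=1}^{n}M_i\big)\geq\sum_{i=1}^{n}{\rm corank}(M_i)$ in (a)(3) follows by juxtaposing surjections out of the summands, and then the converse in (a)(2) follows: were $N_R$ not small, a proper $N_R'$ with $N_R+N_R'=M_R$ would give ${\rm corank}(M_R)\geq{\rm corank}(M_R/N_R\oplus M_R/N_R')\geq{\rm corank}((M/N)_R)+{\rm corank}((M/N')_R)\geq{\rm corank}((M/N)_R)+1$, contradicting ${\rm corank}(M_R)={\rm corank}((M/N)_R)<\infty$.

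Next I would record the hollow lemma: a hollow module cannot surject onto a direct sum of two nonzero modules, since the preimages of the two factors are proper submodules whose sum is the whole module. From this the forward implication of (b) follows without the deep part of (a)(3): if ${\rm corank}(M_R)=k<\infty$ then, coarsening (by merging two summands) any surjection onto $m$ nonzero modules into one onto $m-1$, there is a surjection $\varphi\colon M_R\twoheadrightarrow L_1\oplus\cdots\oplus L_k$ onto nonzero modules with $k$ maximal; maximality forces each $L_i$ to be hollow and forces ${\rm ker}\,\varphi\subseteq_s M_R$, because otherwise the diagonal device yields a surjection of $M_R$ onto $\big(\bigoplus_{i=1}^{k}L_i\big)\oplus(M_R/N_R')$, a direct sum of $k+1$ nonzero modules. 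The main obstacle is the converse inequality in (a)(3); I would reduce it, via the forward implication of (b) just proved together with (a)(2), to the statement ${\rm corank}(H_1\oplus\cdots\oplus H_k)=k$ for hollow $H_i$, whose nontrivial half ${\rm corank}(H_1\oplus\cdots\oplus H_k)\leq k$ I would prove by induction on $k$: from a surjection onto $L_1\oplus\cdots\oplus L_m$ peel off the last hollow summand $H_k$, distinguish whether its image in $L$ is zero (apply the inductive hypothesis to $H_1\oplus\cdots\oplus H_{k-1}$) or a nonzero hollow submodule, and use hollowness together with a bookkeeping of which coordinates that image meets --- this index-chase is where the real work lies.

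Finally, the converse of (b) follows formally: given $\varphi\colon M_R\twoheadrightarrow H_1\oplus\cdots\oplus H_k$ with hollow $H_i$ and ${\rm ker}\,\varphi\subseteq_s M_R$, parts (a)(2), (a)(3) and \cite[Proposition 1.11]{Varadarajan1979} give ${\rm corank}(M_R)={\rm corank}(M_R/{\rm ker}\,\varphi)={\rm corank}\big(\bigoplus_{i=1}^{k}H_i\big)=\sum_{i=1}^{k}{\rm corank}(H_i)=k<\infty$.
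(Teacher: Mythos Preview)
The paper does not prove this proposition at all: it is stated with citations to \cite[Proposition 1.3]{Annin2005} and \cite[Theorem 1.20]{Varadarajan1979} and used as background, with no accompanying \verb|proof| environment. So there is no ``paper's own proof'' to compare against; what you have written is a reconstruction of the classical arguments of Varadarajan and Annin.

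That said, your reconstruction is sound. The arguments for (a)(1), the forward direction of (a)(2), the easy inequality in (a)(3), the converse of (a)(2), and both directions of (b) are correct as written. The one place you openly leave as a sketch --- the inequality ${\rm corank}(H_1\oplus\cdots\oplus H_k)\le k$ for hollow $H_i$ --- is indeed the substantive step, and your proposed induction peeling off $H_k$ is exactly the standard route (this is Varadarajan's original argument); carrying it out requires the observation that the image of a hollow module is zero or hollow, together with a short case analysis on which coordinate projections of that image are nonzero. Your reduction of the general $\le$ in (a)(3) to the hollow case via the forward part of (b) and (a)(2) also needs the (easy) remark that a finite direct sum of small submodules is small in the direct sum, which you use implicitly.
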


$M_R$ is called a {\em Bass module} if every proper submodule is contained in a maximal submodule of $M_R$ \cite[p. 205]{Faith1995}. According to \cite[Exercise 24.7]{Lam1991}, $R$ is right perfect if and only if $R/J(R)$ is semisimple and every non-zero module $M_R$ has a maximal submodule. If $P_R$ is a submodule of $M[x^{-1}]_R$, we set $P_{k} := \{m \in M\ |\  mx^{-k} \in P \}$ for each $k \in \mathbb{N}$ and denote by $\langle P_k \rangle$ the submodule of $M_R$ generated by $P_k$. The following lemma extends \cite[Lemma 2.8]{Annin2005}.

\begin{lemma} \label{AnninLemma2.8}
If $M[x^{-1}]_R$ is a right Bass module, then $N_R \subseteq_s M_R$ if and only if $N[x^{-1}]_A \subseteq_s M[x^{-1}]_A$.
\end{lemma}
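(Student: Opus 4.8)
The plan is to prove the two implications separately, in each case transferring a relation of the form ``$N' + N = M$'' (resp.\ ``$N'[x^{-1}] + N[x^{-1}] = M[x^{-1}]$'') between the module level and the inverse-polynomial-module level. First I would handle the easier direction: assume $N[x^{-1}]_A \subseteq_s M[x^{-1}]_A$ and let $N'_R$ be a submodule of $M_R$ with $N'_R + N_R = M_R$. Then clearly $N'[x^{-1}]_A + N[x^{-1}]_A = M[x^{-1}]_A$ coefficientwise, so smallness of $N[x^{-1}]_A$ forces $N'[x^{-1}]_A = M[x^{-1}]_A$, whence comparing degree-$0$ parts gives $N'_R = M_R$. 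This shows $N_R \subseteq_s M_R$ and uses nothing beyond the definitions.

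The substantive direction is the forward one: assuming $N_R \subseteq_s M_R$ and that $M[x^{-1}]_R$ is a right Bass module, show $N[x^{-1}]_A \subseteq_s M[x^{-1}]_A$. Here I would argue by contradiction. If $N[x^{-1}]_A$ is not small, then $N[x^{-1}]_A + P = M[x^{-1}]_A$ for some proper submodule $P_R$ of $M[x^{-1}]_A$, i.e.\ $P \neq M[x^{-1}]_A$. Since $M[x^{-1}]_R$ is a Bass module, the proper submodule $P$ is contained in a maximal submodule $Q$ of $M[x^{-1}]_A$; then $N[x^{-1}]_A + Q = M[x^{-1}]_A$ as well, so $N[x^{-1}]_A \not\subseteq Q$. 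The key step is to analyze $Q$ via the slices $Q_k = \{m \in M \mid mx^{-k} \in Q\}$ and the generated submodules $\langle Q_k \rangle$ introduced before the lemma. Using the $A$-module action formulas (\ref{eqn:(3.1)}) and (\ref{eqn:(3.2)}) — in particular that multiplying by $x^{j}$ shifts $x^{-k} \mapsto x^{-k+j}$ — I would show that the chain $\langle Q_0 \rangle \subseteq \langle Q_1 \rangle \subseteq \cdots$ (or the appropriate monotonicity coming from $Q$ being a submodule closed under right multiplication by $x$) stabilizes or that maximality of $Q$ forces some $\langle Q_k \rangle$ to be a maximal submodule of $M_R$; since $N_R \subseteq_s M_R$ is contained in every maximal submodule of $M_R$, one gets $N_R \subseteq \langle Q_k \rangle$ for the relevant $k$, and then pushing this back up through the action shows $N[x^{-1}]_A \subseteq Q$, contradicting $N[x^{-1}]_A \not\subseteq Q$.

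The main obstacle — and the place where the completely $(\sigma,\delta)$-compatibility available in this paper (via Proposition \ref{Propertycompletely}, and the fact that $\sigma',\delta'$ also give a completely compatible structure by part (c)) must be invoked — is showing that $N_R \subseteq \langle Q_k \rangle$ for all relevant $k$ actually implies $N[x^{-1}]_A \subseteq Q$. This is exactly where Annin's original argument for skew polynomial rings of automorphism type must be adapted: in the $\delta = 0$ case the action $mx^{-i}rx^{j} = m\sigma'^{i}(r)x^{-i+j}$ is ``diagonal'', but here (\ref{eqn:(3.1)}) produces lower-degree correction terms $m\delta'(r)$, so one must show inductively on negative degree that a polynomial $n(x) \in N[x^{-1}]_A$ with leading term in $N_R x^{-k} \subseteq \langle Q_k\rangle x^{-k}$ can be written, modulo $Q$, as a polynomial of strictly smaller negative degree, and then iterate. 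The compatibility hypotheses are what guarantee the correction terms $m\delta'(r), m\sigma'^i\delta'^j(r)$ land back inside the submodule $Q$ whenever the ``top'' coefficient does, via statements like Proposition \ref{Propertycompletely}(a)(1). Once this descent is established the induction terminates at degree $0$, giving $n(x) \in Q$, and the contradiction closes the proof.
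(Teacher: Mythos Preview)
Your easy direction agrees with the paper's (the paper phrases it contrapositively, you directly; same argument).

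In the forward direction you diverge from the paper in two places, and one of them is a genuine gap. First, a correction: the Bass hypothesis is on $M[x^{-1}]_R$, so the maximal submodule you obtain is a maximal \emph{$R$}-submodule, not an $A$-submodule. The paper is explicit about this: starting from the proper $A$-submodule $Q_A$ it passes to a maximal $R$-submodule $P_R \supseteq Q_R$ and works over $R$ from that point on. Second --- and this is the substantive difference --- the paper never tries to prove $N[x^{-1}] \subseteq P$ and reach a contradiction. It runs the straight contrapositive: from $N[x^{-1}]_R \not\subseteq P_R$ it picks a monomial $nx^{-k}$ with $n\in N$ and $nx^{-k}\notin P$, so $n\notin P_k$; then it invokes an external result, \cite[Lemma~3.11]{HigueraReyes2023a}, which for $P$ maximal in $M[x^{-1}]_R$ yields $\langle P_k\rangle \neq M_R$ and $\langle P_k\rangle + N_R = M_R$. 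That already says $N_R$ is not small. No degree induction, no compatibility, no ``push back up''.

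Your proposed closing step --- deducing $N[x^{-1}]_A \subseteq Q$ from $N_R \subseteq \langle Q_k\rangle$ for all $k$ --- is where the gap lies. Membership of $n$ in $\langle Q_k\rangle$ does \emph{not} give $n\in Q_k$, and it is $n\in Q_k$ (not $n\in\langle Q_k\rangle$) that says $nx^{-k}\in Q$. You try to repair this with complete $(\sigma,\delta)$-compatibility, but Lemma~\ref{AnninLemma2.8} carries no compatibility hypothesis, so any argument relying on it proves a strictly weaker statement than what is claimed. The paper avoids the whole issue by reversing the logic: rather than forcing $N[x^{-1}]$ down into $Q$, it extracts from the maximal $P_R$ a single proper complement $\langle P_k\rangle$ of $N_R$ inside $M_R$, and that is exactly what ``$N_R$ not small'' requires.
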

\begin{proof}
If $N_R$ is not a small submodule of $M_R$, there exists a submodule $L_R$ of $M_R$ such that $M_R = L_R + N_R$ whence $M[x^{-1}]_A = L[x^{-1}]_A + N[x^{-1}]_A$. Thus $N[x^{-1}]_A$ is not a small submodule of $M[x^{-1}]_A$. 

If $N[x^{-1}]_A$ is not a small submodule of $M[x^{-1}]_A$, then there exists a submodule $Q_A$ such that $Q_A \subsetneq M[x^{-1}]_A$ with $M[x^{-1}]_A = Q_A + N[x^{-1}]_A$. Since $M_R$ is a Bass module, there exists a maximal submodule $P_R$ of $M[x^{-1}]_R$ such that $Q_R \subseteq P_R$ whence $M[x^{-1}]_R = P_R + N[x^{-1}]_R$. It is straightforward to see that $N[x^{-1}]_R \nsubseteq P_R$, and so there exists $nx^{-k} \in N[x^{-1}]_R$ such that $nx^{-k} \notin P_R$. Hence, $n \in N_R$ and $n \notin \langle P_k \rangle$ which shows that $N \nsubseteq P_k$ and thus $M_R =\langle P_k \rangle + N_R$ and $\langle P_k\rangle \neq M_R$ by \cite[Lemma 3.11]{HigueraReyes2023a}. Therefore,  $N_R$ is not a small submodule of $M_R$.
\end{proof}

\begin{remark}\label{Remarkguapo}
   Lemma \ref{AnninLemma2.8} holds if we change the condition that $M[x^{-1}]_R$ is Bass and assume that $R$ is right perfect. By \cite[Exercise 24.7]{Lam1991}, if $R$ is right perfect then every non-zero module $M_R$ has a maximal submodule, and so the proof follows the same arguments.
\end{remark}

Lemma \ref{Lemmaspecialcase} shows that under certain conditions, $M[x^{-1}]_A$ is a hollow module.

\begin{lemma}\label{Lemmaspecialcase} If $M_R$ is a simple module and $Q_A$ is a submodule of $M[x^{-1}]_A$ which contains inverse polynomials of arbitrarily negative degree, then $Q_A=M[x^{-1}]$. In particular, if $M_R$ is simple then $M[x^{-1}]_A$ is hollow.    
\end{lemma}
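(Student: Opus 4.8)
The statement has two parts, and the second (hollowness) will follow quickly from the first once we observe that every nonzero submodule of $M[x^{-1}]_A$ with $M_R$ simple must contain inverse polynomials of arbitrarily negative degree. So the core of the work is the first claim. The plan is to take a submodule $Q_A \subseteq M[x^{-1}]_A$ containing elements of negative degree $-k$ for infinitely many $k$, and show it is everything. First I would fix an arbitrary target element $m(x) \in M[x^{-1}]$ of negative degree $-d$; it suffices to show $m(x) \in Q_A$, and in fact by induction on $d$ it suffices to produce from $Q_A$ a single element whose leading term is $m_d x^{-d}$ with $m_d = {\rm lc}(m(x))$, since then $m(x)$ minus that element has strictly smaller negative degree and lies in $Q_A + (\text{lower degree part})$, letting the induction close (the base case $d = 0$ being $M_R$ simple $\Rightarrow M_R \subseteq Q_A$, using any element of $Q$ with nonzero constant term, or reducing to it).

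\textbf{Key reduction.} Pick $q(x) \in Q_A$ of negative degree $-k$ with $k \ge d$ (possible since $Q_A$ has elements of arbitrarily negative degree), say with leading coefficient $q_k \ne 0$. Since $M_R$ is simple, $q_k R = M_R$, so there is $r \in R$ with $q_k r = m_d$ (the desired leading coefficient). Now I would hit $q(x)$ on the right by an appropriate monomial $x^{j}$ with $j = k - d$ and then by $\sigma$-twisted scalars so as to use the action rules (\ref{eqn:(3.1)}), (\ref{eqn:(3.2)}) and formula (\ref{relacion2}): acting by $x^{k-d}$ shifts the leading term from degree $-k$ to degree $-d$ (lower terms either drop out, by (\ref{eqn:(3.2)}), or stay at degree $\le -d$... wait, need care: multiplying $x^{-i}$ by $x^{j}$ gives $x^{-i+j}$ when $j \le i$ and $0$ otherwise, so terms of degree $-i$ with $i < j$ vanish and the rest shift up by $j$; thus after multiplying by $x^{k-d}$ the leading term $q_k x^{-k}$ becomes $q_k x^{-d}$ and all surviving terms have negative degree between $-d$ and $0$ — good). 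Then act by the scalar needed to convert $q_k$ into $m_d$; here I must be careful that right multiplication by $r \in R$ on $q_k x^{-d}$ produces $q_k \sigma'^{d}(\,\cdot\,) x^{-d}$-type terms by (\ref{relacion2})/(\ref{eqn:(3.1)}), so I should choose $r$ to satisfy $q_k \sigma'^{d}(r) = m_d$ (using that $\sigma'$ is an automorphism, equivalently $\sigma$ is, so $\sigma'^{d}$ is surjective and $q_k\sigma'^d$ hits $m_d$ since $q_k\sigma'^d(R) = q_kR = M$). The resulting element lies in $Q_A$, has leading term exactly $m_d x^{-d}$, and the induction hypothesis handles the remainder.

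\textbf{Main obstacle.} The delicate point is bookkeeping the lower-degree "tail" terms that appear when $q(x)$ is multiplied on the right by $x^{k-d}$ and by a scalar: the commutation rule (\ref{relacion2}) spreads a single coefficient into a whole tail of lower-degree terms via the operators $f_k^i$. One must be sure these tails never corrupt the leading term $m_d x^{-d}$ (they do not, since $f_k^i(r)$ for $i < k$ contributes only to strictly lower degree) and that the induction on negative degree is genuinely well-founded — it is, since each reduction step strictly decreases the negative degree while staying inside $Q_A + M[x^{-1}]_{\le d-1}$, and the degree-$0$ base case uses simplicity of $M_R$ directly. For the "in particular" clause: if $M_R$ is simple, any nonzero proper-looking submodule $Q_A$ either contains elements of arbitrarily negative degree (then $Q_A = M[x^{-1}]$ by the first part) or has bounded negative degree, in which case its top-degree coefficients generate a nonzero — hence, by simplicity, all of — $M_R$, and one shows directly that the sum of two such bounded-degree submodules is still proper (it has bounded negative degree, so is not all of $M[x^{-1}]$); thus every proper submodule of $M[x^{-1}]_A$ is small, i.e. $M[x^{-1}]_A$ is hollow.
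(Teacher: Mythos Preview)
Your proposal is correct and follows the same induction-on-degree strategy as the paper: shift an element of $Q_A$ by $x^{k-d}$ to bring its leading term to degree $-d$, peel off the lower-degree tail using the induction hypothesis, and invoke simplicity of $M_R$ to realize an arbitrary leading coefficient. You are in fact more thorough than the paper, which glosses over the scalar step (passing from one monomial $m'x^{-d}\in Q_A$ to all $mx^{-d}$ via right multiplication by $R$ and another appeal to the induction hypothesis) and omits the hollowness argument entirely; your bounded-degree argument for the ``in particular'' clause is exactly what is needed.
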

\begin{proof}
We show by induction on $k \in \mathbb{N}$ that $Q_A$ must contain all inverse monomials of every degree. Let $m(x) \in Q_A$ with leading coefficient $m_k\neq 0$, for some $k \in \mathbb{N}$. Thus $m_k = m(x)x^k \in Q_A$ whence $m_k \in Q \cap M \subseteq M$, and since $M_R$ is simple we have that $Q \cap M = M$. Assume that $Q_A$ contains all monomials of any degree less than $k$. Let $m(x) \in Q_A$ of degree at most $k$ and with leading coefficient $m_l\neq 0$ and $l \ge k$. Then $m(x)x^{l-k} \in Q_A$ with leading term $m'x^{-k}$. By induction hypothesis, all non-leading terms of $m(x)x^{l-k}$ belong to $Q_A$, and so $m'x^{-k} \in Q_A$. It follows that $Q_A$ contains all inverse monomials of any negative degree $k$. 
\end{proof}

Lemma \ref{AnninLemma2.9} is important to prove our main result and extends \cite[Lemma 2.9]{Annin2005}.

\begin{lemma}
\label{AnninLemma2.9} If $M[x^{-1}]_R$ is a right Bass module, then $M_R$ is hollow if and only if $M[x^{-1}]_A$ is hollow.
\end{lemma}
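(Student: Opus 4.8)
The plan is to prove the two implications separately, using the results already established in the paper. For the forward direction, assume $M_R$ is hollow. I would show $M[x^{-1}]_A$ is hollow by showing that every proper submodule is small. So let $Q_A \subsetneq M[x^{-1}]_A$ be a proper submodule; I must show $Q_A \subseteq_s M[x^{-1}]_A$. The key observation is that, since $M_R$ is hollow, every proper submodule of $M_R$ is small in $M_R$; in particular the submodule $\langle Q_k\rangle$ of $M_R$ associated to $Q_A$ at each level $k$ is either all of $M_R$ or small. If $\langle Q_k\rangle = M_R$ for every $k$, then $Q_A$ would contain (up to the $A$-action) enough coefficients to force $Q_A = M[x^{-1}]_A$ by an argument analogous to Lemma~\ref{Lemmaspecialcase} — contradicting properness; hence $\langle Q_k\rangle \ne M_R$, so $\langle Q_k\rangle \subseteq_s M_R$ for at least the relevant $k$. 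Then I would like to invoke Lemma~\ref{AnninLemma2.8}: since $M[x^{-1}]_R$ is Bass, $N_R \subseteq_s M_R$ iff $N[x^{-1}]_A \subseteq_s M[x^{-1}]_A$. The subtlety is that $Q_A$ need not itself be of the form $N[x^{-1}]_A$, so I would pass through a submodule of that shape that contains $Q_A$, or argue directly that $Q_A + N[x^{-1}]_A = M[x^{-1}]_A$ forces $N[x^{-1}]_A = M[x^{-1}]_A$.

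For the converse, assume $M[x^{-1}]_A$ is hollow and let $N_R \subsetneq M_R$ be a proper submodule; I must show $N_R \subseteq_s M_R$. Since $N_R \ne M_R$, by \cite[Lemma 3.11]{HigueraReyes2023a} (used already in the proof of Lemma~\ref{AnninLemma2.8}) we get $N[x^{-1}]_A \ne M[x^{-1}]_A$, i.e.\ $N[x^{-1}]_A$ is a proper submodule of the hollow module $M[x^{-1}]_A$, hence $N[x^{-1}]_A \subseteq_s M[x^{-1}]_A$. Now apply Lemma~\ref{AnninLemma2.8} in the direction $N[x^{-1}]_A \subseteq_s M[x^{-1}]_A \Rightarrow N_R \subseteq_s M_R$. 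This direction is essentially immediate once Lemma~\ref{AnninLemma2.8} is in hand, and it parallels Annin's treatment closely.

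The step I expect to be the main obstacle is the forward direction, specifically handling a general proper submodule $Q_A$ of $M[x^{-1}]_A$ that is not of the special form $N[x^{-1}]_A$. The cleanest route is probably to suppose toward a contradiction that $Q_A$ is not small, pick $L_A$ with $Q_A + L_A = M[x^{-1}]_A$ and $L_A \subsetneq M[x^{-1}]_A$, use the Bass hypothesis on $M[x^{-1}]_R$ to enlarge $L_A$ to a maximal submodule $P_R$ (as in Lemma~\ref{AnninLemma2.8}), then extract a level $k$ and an element witnessing $M_R = \langle P_k\rangle + (\text{the } R\text{-span of coefficients of } Q_A)$ with $\langle P_k\rangle \ne M_R$, contradicting the hollowness of $M_R$. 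Threading the negative-degree bookkeeping through the $A$-action — rather than the simpler $S$-action of Annin — is where the completely $(\sigma,\delta)$-compatible machinery of Proposition~\ref{Propertycompletely} and the multiplication rules \eqref{relacion2}--\eqref{relacion3} will be needed, exactly as in Lemmas~\ref{essentialmodule} and~\ref{AnninLemma2.8}.
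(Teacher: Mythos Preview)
Your converse direction is fine and in fact supplies what the paper omits: the paper's proof treats only the implication ``$M_R$ hollow $\Rightarrow$ $M[x^{-1}]_A$ hollow'' and says nothing about the other direction, so your use of $N_R\subsetneq M_R \Rightarrow N[x^{-1}]_A\subsetneq M[x^{-1}]_A$ together with Lemma~\ref{AnninLemma2.8} is exactly the missing piece.

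For the forward direction, however, your route diverges from the paper's and has a genuine gap. The paper does \emph{not} try to show directly that an arbitrary proper $A$-submodule $Q_A$ is small, nor does it try to read off from $Q_A$ a proper submodule of $M_R$. Instead it picks a maximal submodule $Q_R$ of $M_R$; since $M_R$ is hollow, $Q_R\subseteq_s M_R$, so by Lemma~\ref{AnninLemma2.8} we have $Q[x^{-1}]_A\subseteq_s M[x^{-1}]_A$. Now if $M[x^{-1}]_A=N_A+N_A'$ with $N_A,N_A'$ proper, smallness of $Q[x^{-1}]$ forces $Q[x^{-1}]+N$ and $Q[x^{-1}]+N'$ to remain proper, so the images $\overline{N},\overline{N}'$ in $M[x^{-1}]/Q[x^{-1}]\cong (M/Q)[x^{-1}]$ are proper and still sum to the whole module. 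But $M/Q$ is simple, so Lemma~\ref{Lemmaspecialcase} says $(M/Q)[x^{-1}]_A$ is hollow --- contradiction. The whole point of Lemma~\ref{Lemmaspecialcase} is to serve as the base case for this quotient reduction, not merely as an ``analogous argument''.

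The step in your plan that fails is the sentence ``extract a level $k$ and an element witnessing $M_R = \langle P_k\rangle + (\text{the }R\text{-span of coefficients of }Q_A)$ \dots\ contradicting the hollowness of $M_R$.'' There is no reason the $R$-span of coefficients of a proper $A$-submodule $Q_A$ should be a \emph{proper} submodule of $M_R$. For a concrete obstruction, take $Q_A$ to be the set of all inverse polynomials of degree at least $-k_0$ for some fixed $k_0$; one checks from \eqref{eqn:(3.2)} that this is a proper $A$-submodule of $M[x^{-1}]_A$, yet every element of $M_R$ already occurs as a coefficient (indeed $Q_A\cap M_R=M_R$). So your proposed second summand is all of $M_R$ and no contradiction with hollowness follows. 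The compatibility machinery of Proposition~\ref{Propertycompletely} does not help here: the difficulty is structural (a general $Q_A$ is not of the form $N[x^{-1}]$), not a matter of bookkeeping the $\delta$-terms. Passing to the simple quotient, as the paper does, is what circumvents this.
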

\begin{proof}
Suppose that $M_R$ is hollow and $M[x^{-1}]_A = N_A + N_A'$ for some proper submodules $N_A, N_A'$ of $M[x^{-1}]_A$. If $R$ is right perfect, then there exists a maximal submodule $Q_R$ of $M_R$, and since $M_R$ is hollow, we get that $Q \subseteq_s M$. Lemma \ref{AnninLemma2.8} implies that $Q[x^{-1}] \subseteq_s M[x^{-1}]$, and thus $Q[x^{-1}] + N$ and $Q[x^{-1}] + N'$ are both proper submodules of $M[x^{-1}]_A$ where
\[(Q[x^{-1}] + N)+(Q[x^{-1}] + N')=M[x^{-1}].\]
 Since $Q[x^{-1}]_A$ is a small submodule of $M[x^{-1}]_A$, it follows that $N_A, N_A' \subsetneq Q[x^{-1}]_A$. So, the images of these two modules in $ M[x^{-1}]/Q[x^{-1}] \cong (M/Q)[x^{-1}]$ are non-zero and proper, and they sum to the whole module $(M/Q)[x^{-1}]_A$, that is, $(M/Q)[x^{-1}]_A$ is not hollow. On the other hand, if $(M/Q)_R$ is simple then $(M/Q)[x^{-1}]_A$ is hollow by Lemma \ref{Lemmaspecialcase}, which is a contradiction. Hence, $M[x^{-1}]_A$ is a hollow module.
\end{proof}

Lemma \ref{AnninLemma2.9} is also true if $R$ is right perfect (Remark \ref{Remarkguapo}). The following theorem shows that the couniform dimensions of $M_R$ and $M[x^{-1}]_A$ are equal and generalizes \cite[Theorem 2.10]{Annin2005}.

\begin{Theorem}\label{Annintheorem2.10}
If $M[x^{-1}]_R$ is a right Bass module then 
\[
{\rm corank}(M[x^{-1}]_A)={\rm corank}(M_R).
\]
\end{Theorem}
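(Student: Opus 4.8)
The plan is to separate the cases ${\rm corank}(M_R)<\infty$ and ${\rm corank}(M_R)=\infty$, and in each to push the couniform structure of $M_R$ up to $M[x^{-1}]_A$ via the structure theorem for couniform dimension together with Lemmas~\ref{AnninLemma2.8} and~\ref{AnninLemma2.9}. Two preliminary observations make this work. First, for any submodule $N_R$ of $M_R$ the coefficientwise reduction map is an $A$-module isomorphism $M[x^{-1}]_A/N[x^{-1}]_A\cong(M/N)[x^{-1}]_A$, and for $R$-modules $H_1,\dots,H_n$ one has $(H_1\oplus\cdots\oplus H_n)[x^{-1}]_A\cong H_1[x^{-1}]_A\oplus\cdots\oplus H_n[x^{-1}]_A$; both are immediate from the fact that the $A$-action in (\ref{eqn:(3.1)})--(\ref{eqn:(3.2)}) is performed coefficientwise. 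Second, the Bass hypothesis descends: every quotient of a right Bass module is right Bass, and since $H[x^{-1}]_R\cong M[x^{-1}]_R/K[x^{-1}]_R$ whenever $H\cong M/K$, the module $H[x^{-1}]_R$ is right Bass for every quotient $H$ of $M_R$, so Lemma~\ref{AnninLemma2.9} is applicable to all such $H$.

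Assume first ${\rm corank}(M_R)=n<\infty$. By Proposition~\ref{Proposition1.5Couniform} there are hollow modules $H_1,\dots,H_k$ and a surjection $\varphi\colon M_R\to H_1\oplus\cdots\oplus H_k$ with $K:=\ker\varphi\subseteq_s M_R$; since $M_R/K\cong\bigoplus_{i=1}^{k}H_i$ and each $H_i$ is hollow, Proposition~\ref{proposition1.3couniform} (its parts on small submodules and on finite direct sums) forces $k=n$. Lemma~\ref{AnninLemma2.8} now upgrades $K\subseteq_s M_R$ to $K[x^{-1}]_A\subseteq_s M[x^{-1}]_A$, so Proposition~\ref{proposition1.3couniform} together with the isomorphism $M[x^{-1}]_A/K[x^{-1}]_A\cong(M/K)[x^{-1}]_A\cong\bigoplus_{i=1}^{n}H_i[x^{-1}]_A$ gives
\[
{\rm corank}(M[x^{-1}]_A)={\rm corank}\Bigl(\bigoplus_{i=1}^{n}H_i[x^{-1}]_A\Bigr)=\sum_{i=1}^{n}{\rm corank}(H_i[x^{-1}]_A).
\]
Each $H_i$ is hollow with $H_i[x^{-1}]_R$ right Bass, so $H_i[x^{-1}]_A$ is hollow by Lemma~\ref{AnninLemma2.9}, that is ${\rm corank}(H_i[x^{-1}]_A)=1$; hence the right-hand side equals $n$ and ${\rm corank}(M[x^{-1}]_A)=n={\rm corank}(M_R)$.

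For ${\rm corank}(M_R)=\infty$: for each $k$ pick a surjection $M_R\to L_1\oplus\cdots\oplus L_k$ with all $L_i$ nonzero; passing to inverse polynomials and using the second isomorphism above produces a surjection $M[x^{-1}]_A\to\bigoplus_{i=1}^{k}L_i[x^{-1}]_A$ onto a direct sum of $k$ nonzero $A$-modules, so ${\rm corank}(M[x^{-1}]_A)\ge k$ for every $k$, i.e. ${\rm corank}(M[x^{-1}]_A)=\infty$. The same construction incidentally yields the easy inequality ${\rm corank}(M[x^{-1}]_A)\ge{\rm corank}(M_R)$ in full generality.

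I expect the crux to be the finite case and in particular the bound ${\rm corank}(M[x^{-1}]_A)\le{\rm corank}(M_R)$: any surjection of $M_R$ onto a direct sum lifts to $M[x^{-1}]_A$ (the easy direction), but keeping the kernel small — so that quotienting it out does not alter the couniform dimension — is exactly what Lemma~\ref{AnninLemma2.8} delivers, and it is the ingredient that prevents the couniform dimension from growing under $M\mapsto M[x^{-1}]$. The supporting points (the two structural isomorphisms and the descent of the Bass property to the modules $H_i[x^{-1}]_R$) are routine, but they must be in place to legitimise the repeated appeals to Lemmas~\ref{AnninLemma2.8} and~\ref{AnninLemma2.9} and to Proposition~\ref{proposition1.3couniform}.
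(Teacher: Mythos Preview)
Your proof is correct and follows essentially the same route as the paper's: both split into the finite and infinite cases, use Proposition~\ref{Proposition1.5Couniform} to obtain a small-kernel surjection onto a direct sum of hollow modules, lift the smallness of the kernel via Lemma~\ref{AnninLemma2.8}, lift hollowness via Lemma~\ref{AnninLemma2.9}, and conclude with Proposition~\ref{proposition1.3couniform}. Your version is in fact slightly more careful than the paper's at one point: you explicitly check that the Bass hypothesis on $M[x^{-1}]_R$ descends to each $H_i[x^{-1}]_R$ before invoking Lemma~\ref{AnninLemma2.9}, a verification the paper omits.
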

\begin{proof}
By Proposition \ref{Proposition1.5Couniform}, if ${\rm corank}(M_R) = n$ then there exist $H_1, \ldots, H_n$ hollow $R$-modules and a surjective homomorphism $\varphi$ of $M_R$ over $\overline{M}_R:=H_1 \oplus \cdots \oplus H_n$ such that ${\rm ker}\ \varphi \subseteq_s M$. In addition, the map $\varphi$ induces a homomorphism $\psi$ of $M[x^{-1}]_A$ over $\overline{M[x^{-1}]}_A:= H_1[x^{-1}]\oplus\cdots\oplus H_n[x^{-1}]$ given by $\psi(mx^{-k}):= \varphi(m)x^{-k}$ for all $mx^{-k}\in M[x^{-1}]_A$. If $m'x^{-i}\in \overline{M[x^{-1}]}_A$ and $\varphi$ is a surjective map, then there exists $m\in M_R$ such that $\varphi(m)=m'$ and thus $\psi(mx^{-i})=\varphi(m)x^{-i}=m'x^{-i}$. Therefore, we have that $\psi$ is surjective. 

Notice that if $mx^{-i} \in {\rm ker}\ \psi$ then $\varphi(m)x^{-i}=0$ which means that $\varphi(m)=0$, and thus $\varphi(m)x^{-i} \in ({\rm ker\ \varphi})[x^{-1}]$. It is not difficult to show that $({\rm ker\ \varphi})[x^{-1}] \subseteq {\rm ker}\ \psi$, and hence ${\rm ker}\ \psi = ({\rm ker\ \varphi})[x^{-1}]$. By Lemma \ref{AnninLemma2.8}, ${\rm ker}\ \psi = ({\rm ker\ \varphi})[x^{-1}] \subseteq_s M[x^{-1}]$, and by Proposition \ref{proposition1.3couniform} (2) we have that
\[{\rm corank}(M[x^{-1}]_A)={\rm corank} \left((M[x^{-1}]/{\rm ker}\ \psi)_A \right) = {\rm corank}\left(\overline{M[x^{-1}]}_A \right).\]
 
If $H_i$ is hollow then $H_i[x^{-1}]_A$ is a hollow module, and thus ${\rm corank} (H_i[x^{-1}]_A)=1$ for all $1 \le i \le n$ by Lemma \ref{AnninLemma2.9}. This implies the equalities
\[
{\rm corank}(M[x^{-1}]_A)= \sum_{i=1}^{n}{\rm corank}\left(H_i[x^{-1}]_A \right)=n.
\]

If ${\rm corank}(M_R)=\infty$, then there exists a surjective homomorphism $\varphi_k$ of $M_R$ over $\overline{M}_R:=N_1 \oplus \cdots \oplus N_k$ with $N_i \neq 0$ and $k\in \mathbb{N}$ arbitrarily large. This homomorphism $\varphi_k$ induces a surjective homomorphism $\psi_k$ of $M[x^{-1}]_A$ over $\overline{M[x^{-1}]}_A$ for each $k\in \mathbb{N}$, which shows that ${\rm corank}(M[x^{-1}]_A)=\infty$.
\end{proof}

As a consequence of Remark \ref{Remarkguapo}, we have the following corollaries.

\begin{corollary} \label{SkewOreTheorem2.10} If $R$ is right perfect then 
\[
{\rm corank}(M[x^{-1}]_A)={\rm corank}(M_R).
\]
\end{corollary}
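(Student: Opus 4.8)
The plan is to deduce Corollary \ref{SkewOreTheorem2.10} directly from Theorem \ref{Annintheorem2.10} together with Remark \ref{Remarkguapo}, which asserts that the key lemmas (Lemma \ref{AnninLemma2.8}, and hence Lemma \ref{AnninLemma2.9}) remain valid if one replaces the hypothesis that $M[x^{-1}]_R$ is a right Bass module by the hypothesis that $R$ is right perfect. Thus, first I would observe that the only places where the Bass hypothesis enters the proof of Theorem \ref{Annintheorem2.10} are through the invocations of Lemmas \ref{AnninLemma2.8} and \ref{AnninLemma2.9} (used to show $({\rm ker}\ \varphi)[x^{-1}]\subseteq_s M[x^{-1}]$ and that $H_i[x^{-1}]_A$ is hollow, respectively). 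Since, by \cite[Exercise 24.7]{Lam1991}, right perfectness of $R$ guarantees that every non-zero module $M_R$ has a maximal submodule — which is precisely the substitute used in Remark \ref{Remarkguapo} — both lemmas apply verbatim under the right-perfect hypothesis.

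The key steps, then, are: (1) assume $R$ is right perfect; (2) invoke Proposition \ref{Proposition1.5Couniform} to reduce, when ${\rm corank}(M_R)=n<\infty$, to a surjection $\varphi\colon M_R\twoheadrightarrow H_1\oplus\cdots\oplus H_n$ with hollow $H_i$ and ${\rm ker}\ \varphi\subseteq_s M_R$; (3) lift $\varphi$ to $\psi\colon M[x^{-1}]_A\twoheadrightarrow H_1[x^{-1}]_A\oplus\cdots\oplus H_n[x^{-1}]_A$, exactly as in the proof of Theorem \ref{Annintheorem2.10}, noting ${\rm ker}\ \psi = ({\rm ker}\ \varphi)[x^{-1}]$; (4) apply the right-perfect version of Lemma \ref{AnninLemma2.8} (Remark \ref{Remarkguapo}) to conclude ${\rm ker}\ \psi\subseteq_s M[x^{-1}]_A$, so by Proposition \ref{proposition1.3couniform}(2) the coranks of $M[x^{-1}]_A$ and of $\bigoplus_i H_i[x^{-1}]_A$ agree; (5) apply the right-perfect version of Lemma \ref{AnninLemma2.9} to each $H_i$ to get ${\rm corank}(H_i[x^{-1}]_A)=1$, and conclude by additivity (Proposition \ref{proposition1.3couniform}(3)) that ${\rm corank}(M[x^{-1}]_A)=n$; (6) for the infinite case, reproduce the lifting of $\varphi_k$ to $\psi_k$ for arbitrarily large $k$, which forces ${\rm corank}(M[x^{-1}]_A)=\infty$.

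Since there is essentially no new content beyond substituting one sufficient condition for another, the proof is short. The main (very mild) obstacle is simply making explicit that all three auxiliary results — Lemmas \ref{AnninLemma2.8}, \ref{AnninLemma2.9} and the corank machinery of Theorem \ref{Annintheorem2.10} — depend on the Bass hypothesis \emph{only} through the existence of maximal submodules of the relevant modules, and that this existence is supplied, for \emph{all} non-zero modules over $R$, by right perfectness via \cite[Exercise 24.7]{Lam1991}. Once that bookkeeping is acknowledged (as it already is in Remark \ref{Remarkguapo}), the statement follows.

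\begin{proof}
Assume that $R$ is right perfect. By \cite[Exercise 24.7]{Lam1991}, every non-zero module $M_R$ has a maximal submodule, so by Remark \ref{Remarkguapo} both Lemma \ref{AnninLemma2.8} and Lemma \ref{AnninLemma2.9} hold with the hypothesis ``$M[x^{-1}]_R$ is a right Bass module'' replaced by ``$R$ is right perfect''. Now repeat the argument of Theorem \ref{Annintheorem2.10}: if ${\rm corank}(M_R)=n$, Proposition \ref{Proposition1.5Couniform} yields hollow $R$-modules $H_1, \ldots, H_n$ and a surjection $\varphi\colon M_R \to \overline{M}_R := H_1\oplus\cdots\oplus H_n$ with ${\rm ker}\ \varphi \subseteq_s M$. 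The induced map $\psi\colon M[x^{-1}]_A \to \overline{M[x^{-1}]}_A := H_1[x^{-1}]\oplus\cdots\oplus H_n[x^{-1}]$, $\psi(mx^{-k}) := \varphi(m)x^{-k}$, is surjective with ${\rm ker}\ \psi = ({\rm ker}\ \varphi)[x^{-1}]$, which is small in $M[x^{-1}]_A$ by the right-perfect version of Lemma \ref{AnninLemma2.8}. Hence, using Proposition \ref{proposition1.3couniform} (2),
\[
{\rm corank}(M[x^{-1}]_A) = {\rm corank}\left((M[x^{-1}]/{\rm ker}\ \psi)_A\right) = {\rm corank}\left(\overline{M[x^{-1}]}_A\right).
\]
By the right-perfect version of Lemma \ref{AnninLemma2.9}, each $H_i[x^{-1}]_A$ is hollow, so ${\rm corank}(H_i[x^{-1}]_A)=1$, and by Proposition \ref{proposition1.3couniform} (3),
\[
{\rm corank}(M[x^{-1}]_A) = \sum_{i=1}^{n}{\rm corank}\left(H_i[x^{-1}]_A\right) = n.
\]
Finally, if ${\rm corank}(M_R)=\infty$, then for arbitrarily large $k$ there is a surjection $\varphi_k\colon M_R \to N_1\oplus\cdots\oplus N_k$ with each $N_i \neq 0$; the induced surjection $\psi_k\colon M[x^{-1}]_A \to N_1[x^{-1}]\oplus\cdots\oplus N_k[x^{-1}]$ shows that ${\rm corank}(M[x^{-1}]_A)=\infty$. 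Therefore ${\rm corank}(M[x^{-1}]_A) = {\rm corank}(M_R)$.
\end{proof}
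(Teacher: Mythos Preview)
Your proof is correct and follows exactly the approach the paper intends: the paper presents this corollary without proof, simply as ``a consequence of Remark \ref{Remarkguapo}'', and you have spelled out that this means rerunning the argument of Theorem \ref{Annintheorem2.10} with the Bass hypothesis traded for right perfectness via \cite[Exercise 24.7]{Lam1991}. Nothing to add.
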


\begin{corollary}[{\cite[Theorem 2.10]{Annin2005}}] If $R$ is right perfect then 
\[
{\rm corank}(M[x^{-1}]_S)={\rm corank}(M_R).
\]
\end{corollary}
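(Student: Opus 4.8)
The plan is to obtain this statement as the degenerate case $\delta=0$ of Corollary \ref{SkewOreTheorem2.10}, so essentially no new work is needed beyond checking that the hypotheses and constructions specialize correctly. First I would observe that the zero map is a locally nilpotent $\sigma$-derivation in the trivial sense ($\delta^{1}(r)=0$ for every $r\in R$), and that in Annin's setting $\sigma$ is assumed to be an automorphism of $R$; hence Definition \ref{importantdefinition} applies and we may form $A:=R(x;\sigma,0)$. By Example \ref{ExampleskewOre}(1) the commutation rule $xr=\sigma(r)x+x\delta(r)x$ collapses to $xr=\sigma(r)x$, so $A=R(x;\sigma)=R[x;\sigma]=S$ as rings.

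Next I would match the module structures. With $\delta=0$ one has $\sigma'(r)=\sigma^{-1}(r)$ and $\delta'(r)=-\delta\sigma^{-1}(r)=0$, so the endomorphisms $f_j^{i}$ collapse to $f_j^{j}=\sigma'^{\,j}$ and $f_j^{i}=0$ for $i<j$. Consequently the $A$-module structure on $M[x^{-1}]$ defined by (\ref{eqn:(3.1)}) and (\ref{eqn:(3.2)}) reduces to $mx^{-i}rx^{j}=m\sigma'^{\,i}(r)x^{-i+j}$ (for $j\le i$), which is exactly the $S$-module $M[x^{-1}]_S$ considered by Annin, as already noted in the paragraph following (\ref{eqn:(3.2)}). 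Thus $M[x^{-1}]_A$ and $M[x^{-1}]_S$ coincide as right $S$-modules.

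Finally, since $R$ is right perfect by hypothesis, Corollary \ref{SkewOreTheorem2.10} gives ${\rm corank}(M[x^{-1}]_A)={\rm corank}(M_R)$, and substituting $M[x^{-1}]_A=M[x^{-1}]_S$ yields ${\rm corank}(M[x^{-1}]_S)={\rm corank}(M_R)$, which is the assertion. The only point requiring a moment of care — and it is not really an obstacle — is to confirm that the auxiliary results feeding into Theorem \ref{Annintheorem2.10} (via Remark \ref{Remarkguapo}) behave well when $\delta=0$: the $\delta$-compatibility conditions become vacuous, the small-submodule and hollow-module lemmas are stated for $A$ with $R$ right perfect and hence apply verbatim with $A=S$, so every ingredient transfers without change.
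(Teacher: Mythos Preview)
Your proposal is correct and follows exactly the route the paper intends: the corollary is stated without proof as an immediate consequence of Corollary~\ref{SkewOreTheorem2.10} (via Remark~\ref{Remarkguapo}), obtained by specializing to $\delta=0$ so that $A=S$ and $M[x^{-1}]_A=M[x^{-1}]_S$. Your write-up simply makes explicit the verification that this specialization is legitimate, which the paper leaves tacit.
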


\section{Examples}
   The relevance of the results presented in the paper is appreciated when we extend their application to algebraic structures that are more general than those considered by Annin \cite{Annin2005}, that is, noncommutative rings which cannot be expressed as skew polynomial rings of endomorphism type. 

\begin{example}
  Let $A$ be the Jordan plane $\mathcal{J}(\Bbbk)$ subject to the relation $yx = xy + y^2$ and $M[y^{-1}]_{A}$ the module defined by (\ref{eqn:(3.1)}) and (\ref{eqn:(3.2)}). If $M_{\Bbbk[x]}$ is a right module such that $M[y^{-1}]_{\Bbbk[x]}$ is completely $(\sigma,\delta)$-compatible, then $M_{\Bbbk[x]}$ and $M[y^{-1}]_{A}$ have the same uniform dimension by Theorem \ref{theoremuniform}. Additionally, if $M[y^{-1}]_{\Bbbk[x]}$ is Bass then ${\rm corank}(M_{\Bbbk[x]})= {\rm corank}(M[y^{-1}]_{A})$ by Theorem \ref{Annintheorem2.10}.
\end{example}


\begin{example}
  Consider $A$ as the {$q$-meromorphic Weyl algebra} $MW_q$ and $M[x^{-1}]_{A}$ defined by the relations (\ref{eqn:(3.1)}) and (\ref{eqn:(3.2)}). If $M[x^{-1}]_{\Bbbk[y]}$ is completely $(\sigma,\delta)$-compatible, then the uniform dimensions of $M_{\Bbbk[y]}$ and $M[x^{-1}]_{A}$ are equals by Theorem \ref{theoremuniform}. If $M[x^{-1}]_{\Bbbk[y]}$ is Bass then $M_{\Bbbk[y]}$ and $M[x^{-1}]_{A}$ have the same couniform dimension by Theorem \ref{Annintheorem2.10}. 
  
\end{example}

\begin{example}
   Let $A$ be the ring of skew Ore polynomials of higher order $Q(0,b,c)$ and $M[y^{-1}]_{A}$ defined by (\ref{eqn:(3.1)}) and (\ref{eqn:(3.2)}). If $M[y^{-1}]_{\Bbbk[x]}$ is completely $(\sigma,\delta)$-compatible then ${\rm rudim}(M_{\Bbbk[x]})= {\rm rudim}(M[y^{-1}]_{A})$ by Theorem \ref{theoremuniform}. If $M[y^{-1}]_{\Bbbk[x]}$ is Bass then ${\rm corank}(M_{\Bbbk[x]})= {\rm corank}(M[y^{-1}]_{A})$ by Theorem \ref{Annintheorem2.10}. In a similar way, we get a description of the uniform and couniform dimension of $M[x^{-1}]_{A}$, where $A=Q(a,b,0)$.
\end{example}

\begin{example}
  Consider $A$ as the skew Ore polynomial of higher order defined by Smits \cite{Smits1968} subject to the commutation rule $xr:=r_1x + r_2x^2 + \cdots$ such that $\delta_1$ is an automorphism of $D$ and $\{\delta_2, \ldots, \delta_k \}$ is a set of left $D$-independient endomorphism. If $M[x^{-1}]_{D}$ is completely $(\sigma,\delta)$-compatible then ${\rm rudim}(M_{D})= {\rm rudim}(M[x^{-1}]_{A})$ by Theorem \ref{theoremuniform}. If $M[x^{-1}]_{D}$ is Bass then $M_{D}$ and $M[x^{-1}]_{A}$ have the same couniform dimension by Theorem \ref{Annintheorem2.10}. If $D$ is right perfect then the equality of the couniform dimensions follows from Corollary \ref{SkewOreTheorem2.10}.
\end{example}

\begin{example} Zhang and Zhang \cite{ZhangZhang2008} defined the {\em double extensions} over a $\Bbbk$-algebra $R$ and presented different families of Artin-Schelter regular algebras of global dimension four. It is possible to find some similarities between the double extensions and two-step iterated skew polynomial rings, nevertheless, there exist no inclusions between the classes of all double extensions and of all length two iterated skew polynomial rings (c.f. \cite{Carvalhoetal2011}). Several authors have studied different relations of double extensions with Poisson, Hopf, Koszul and Calabi-Yau algebra (see \cite{RamirezReyes2024} and reference therein). We start by recalling the definition of a double extension in the sense of Zhang and Zhang, and since some typos occurred in their papers \cite[p. 2674]{ZhangZhang2008} and \cite[p. 379]{ZhangZhang2009} concerning the relations that the data of a double extension must satisfy, we follow the corrections presented by Carvalho et al. \cite{Carvalhoetal2011}.

\begin{definition} [{\cite[Definition 1.3]{ZhangZhang2008}; \cite[Definition 1.1]{Carvalhoetal2011}}]\label{DefinitionDoubleExtension}
If $B$ is a $\Bbbk$-algebra and $R$ is a subalgebra of $B$, then 
        \begin{itemize}
            \item[(a)] $B$ is called a {\em right double extension} of $R$ if the following conditions hold: 
        \begin{itemize}
        \item[\rm (i)] $B$ is generated by $R$ and two new variables $y_1$ and $y_2$.
        \item[\rm (ii)] $y_1$ and $y_2$ satisfy the relation
        \begin{equation}\label{eqn:aii}
        y_2y_1 = p_{12}y_1y_2 + p_{11}y_1^2 + \tau_1y_1 + \tau_2y_2 + \tau_0, 
        \end{equation}
        where $p_{12}, p_{11} \in \Bbbk$ and $\tau_1, \tau_2, \tau_0 \in R$.
        \item[\rm (iii)] $B$ is a free left $R$-module with a basis $\left \{y_1^{i}y_2^{j} \ | \ i,j \geq 0 \right \}$.
        \item[\rm (iv)] $y_1R + y_2R + R\subseteq Ry_1 + Ry_2 + R$.
      \end{itemize}
       \item[(b)] A right double extension $B$ of $R$ is called a {\em double extension} if
        \begin{itemize}
        \item[\rm (i)] $p_{12}\neq 0$.
        \item[\rm (iii)] $B$ is a free right $R$-module with a basis $\left \{y_2^{i}y_i^{j} \ | \ i,j \geq 0 \right \}$.
        \item[\rm (iv)] $y_1R + y_2R + R = Ry_1 + Ry_2 + R$.
      \end{itemize}
   \end{itemize}
   \end{definition}
    Condition (a)(iv) from Definition \ref{DefinitionDoubleExtension} is equivalent to the existence of two maps
\begin{center}
    $\sigma(r):=\begin{pmatrix} \sigma_{11}(r) & \sigma_{12}(r) \\ \sigma_{21}(r) & \sigma_{22}(r) \end{pmatrix}
    \ \text{and}\ \delta(r):= \begin{pmatrix} \delta_1(r)\\ \delta_2(r) \end{pmatrix}$ for all $r \in R$,
\end{center}

such that 
\begin{equation}\label{eqn:R2}
    \begin{pmatrix} y_1\\ y_2 \end{pmatrix}r:= \begin{pmatrix} y_1r\\ y_2r \end{pmatrix}=\begin{pmatrix} \sigma_{11}(r) & \sigma_{12}(r) \\ \sigma_{21}(r) & \sigma_{22}(r) \end{pmatrix} \begin{pmatrix} y_1\\ y_2 \end{pmatrix} + \begin{pmatrix} \delta_1(r)\\ \delta_2(r) \end{pmatrix}.
\end{equation}

If $B$ is a right double extension of $R$ then we write $B := R_P [y_1, y_2; \sigma, \delta, \tau]$ where $P:=\{ p_{12}, p_{11} \}\subseteq \Bbbk$, $\tau:=\{\tau_1, \tau_2, \tau_0 \}\subseteq R$ and $\sigma,\delta$ are as above. the set $P$ is called a {\em parameter} and $\tau$ a {\em tail}. If $\delta:=0$ and $\tau$ consists of zero elements then the double extension is denoted by $R_P [y_1, y_2; \sigma]$ and is called a {\em trimmed double extension} \cite[Convention 1.6 (c)]{ZhangZhang2008}. It is straightforward to see that the relation (\ref{eqn:aii}) is given by 
\begin{equation}\label{eqn:RRR}
    y_2y_1 = p_{12}y_1y_2 + p_{11}y_1^2.
\end{equation}
Since $p_{12}, p_{11}\in \Bbbk$ the expression (\ref{eqn:RRR}) can be written as $y_1y_2= p_{12}^{-1}y_2y_1 - p_{12}^{-1}p_{11}y_1^2$. It is clear that $\sigma(y_2)=p_{12}^{-1}y_2$ is an automorphism of $\Bbbk[y_2]$ and $\delta(y_2)=- p_{12}^{-1}p_{11}$ is a locally nilpotent $\sigma$-derivation of $\Bbbk[y_2]$. Thus, the algebra $R_P [y_1, y_2; \sigma]$ can be seen as $A=\Bbbk[y_2](y_1;\sigma,\delta)$. If $M[y_1^{-1}]_{\Bbbk[y_2]}$ is completely $(\sigma,\delta)$-compatible, then $M_{\Bbbk[y_2]}$ and $M[y_1^{-1}]_{A}$ have the same uniform dimension by Theorem \ref{theoremuniform}. If $M[y_1^{-1}]_{\Bbbk[y_2]}$ is Bass then ${\rm corank}(M_{\Bbbk[y_2]})= {\rm corank}(M[y_1^{-1}]_{A})$ by Theorem \ref{Annintheorem2.10}. 

\end{example}

\section{Future work}

Bell and Goodearl \cite{BellGoodearl1988} studied the {\em reduced rank} (or {\em torsionfree rank}) of a particular kind of generalized differential operator ring known as {\em PBW extension}. They proved that if $T$ is a PBW extension over a right Noetherian ring $R$, then $T$ and $R$ have the same reduced rank \cite[Theorem 6.4]{BellGoodearl1988}. Gallego and Lezama \cite{GallegoLezama2011} introduced the {\em skew PBW extensions} as a generalization of the PBW extensions and skew polynomial rings of injective type. Since its introduction, ring and homological properties of skew PBW extensions have been widely studied (see \cite{LFGRSV} and reference therein). In this way, it is natural to investigate the reduced rank of skew PBW extensions.

\end{document}